\def\N{{\mathbb N}}
\def\R{{\mathbb R}}
\def\P{{\mathbb P}}
\def\E{{\mathbb E}}
\def\cal{\mathcal}
\def\eps{\varepsilon}
\def\etal{{\em et al.}}
\newcommand\ind[1]{\mathbbm{1}_{\{#1\}}}
\newtheorem{thm}{Theorem}
\newtheorem{prop}{Proposition}
\newtheorem*{lemma}{Lemma}
\newtheorem{corollary}{Corollary}
\theoremstyle{definition}
\newtheorem{definition}{Definition}
\title[Occupancy Schemes Associated to Yule Processes]{Occupancy Schemes Associated to Yule Processes}
\author[Ph. Robert]{Philippe Robert}
\author[F. Simatos]{Florian Simatos}
\address[Ph. Robert, F. Simatos]{INRIA Paris --- Rocquencourt, Domaine de Voluceau, BP 105,  
78153   Le Chesnay, France. } 
\email{Philippe.Robert@inria.fr}
\email{Florian.Simatos@inria.fr}
\urladdr{http://www-rocq.inria.fr/\string~robert}
\date{\today}
\keywords{Occupancy Schemes; Poisson Processes; Convergence of Point Processes; Random
  Environment; Rare Events}
\begin{document}

\bibliographystyle{plain}

\begin{abstract}
An occupancy problem with  an infinite number of bins and a  random probability vector for
the locations of the balls is considered.  The respective sizes of bins are related to the
split times of  a Yule process.  The  asymptotic behavior of the landscape  of first empty
bins, i.e.,  the  set of corresponding indices represented by  point processes, is analyzed
and    convergences    in    distribution    to   mixed   Poisson    processes    are
established. Additionally, the influence of the random environment, the random probability
vector, is analyzed. It  is represented by two main components: an  i.i.d.\ sequence and a
fixed random variable.  Each of these  components has a specific impact on the qualitative
behavior of the  stochastic model. It is shown  in particular that for some  values of the
parameters, some  rare events,  which are  identified, play an  important role  on average
values of the number of empty bins in some regions.
\end{abstract}

\maketitle

\bigskip

\hrule

\vspace{-1cm}

\tableofcontents

\vspace{-1cm}

\hrule

\bigskip

\section{Introduction}
Occupancy schemes in terms of bins and  balls offer a very  flexible and elegant  way to
formulate  various problems  in  computer  science, biology  and  applied mathematics  for
example.  One of  the earliest models investigated in the  literature consists in throwing
$m$ balls  at random into $n$  identical bins. Asymptotic behavior  of occupancy variables
have been  analyzed when $n$  grows to  infinity, with different  scalings in $n$  for the
variable   $m$.   The   books   by  Johnson   and   Kotz~\cite{Johnson77:0}  and   Kolchin
\etal~\cite{Kolchin:01} are  classical references  on this topic.   See also  Chapter~6 of
Barbour \etal~\cite{Barbour:01} for a recent presentation of these problems.

An extension of these models is when there is an infinite number of bins and a probability
vector $(p_n)$  on $\N$ describing  the way balls  are sent: for  $n\geq 0$, $p_n$  is the
probability that a ball  is sent into the $n$th bin.  In one of  the first studies in this
setting,  Karlin~\cite{Karlin67:0}  analyzed the  asymptotic  behavior  of  the number  of
occupied bins.  More recently Hwang  and Janson~\cite{Hwang07:0} proves in a quite general
framework central limit results for these  quantities.  In this setting,
some  additional variables  are also  of interest  like the  sets of  indices of
occupied or  empty bins,  adding a  geometric component to  these problems.   For specific
probability  vectors  $(p_n)$  Cs\'aki  and  F\"oldes~\cite{Csaki76:0}  and
Flajolet and Martin~\cite{Flajolet85:0} investigated the  index of the first empty bin.
See the recent survey Gnedin  \etal~\cite{Gnedin07:0} for more references on the occupancy
problem with infinitely many bins.

A  further extension  of  these stochastic  models  consists in  considering {\em  random}
probability vectors.  Gnedin~\cite{Gnedin04:0} (and  subsequent papers) analyzed  the case
where  $(p_n)$ decays geometrically  fast according  to some  random variables,  i.e.,  for
$n\geq1$, $p_n=\prod_{i=1}^{n-1} Y_i(1-Y_n)$ where  $(Y_i)$ are i.i.d.\ random variables on
$(0,1)$.  Various  asymptotic results  on the number  of occupied  bins in this  case have
been obtained. The random vector can be seen as a ``random environment'' for the bins and
balls problem, it complicates significantly the asymptotic results in some cases. In
particular, the  indices of the urns in which the balls fall are no longer independent random
variables as in the deterministic case. 

The general goal of  this paper is to investigate in detail  the impact of this randomness
for  a   bins  and  balls  problem  associated   to  a  Yule  process,   see  Athreya  and
Ney~\cite{Athreya:04} for the definition of a Yule process. This (quite natural) stochastic model has its origin in
network modeling,  see Simatos \etal~\cite{Simatos08:0}  for a detailed  presentation.  It
can be  described as follows:  the non-decreasing sequence  $(t_n)$ of split times  of the
Yule  process  defines  the  bins,  the   $n$th  bin,  $n\geq  1$,  being  the  interval
$(t_{n-1},t_n]$. The  locations of  balls are represented  by independent exponential  random variables
with parameter $\rho$. The main problem investigated here concerns the asymptotic
description of the set of  indices of   first empty bins when the number of
balls goes to infinity. Mathematically, it is  formulated as a convergence in distribution
of  rescaled point processes having Dirac masses at the indices of empty bins.

For $n\geq  1$, if $P_n$ is the  probability that a ball  falls into the $n$th  bin, it is
easily seen  that, for a  large $n$,  $P_n$ has a power law decay, it can be  expressed as
$V  E_n/n^{\rho+1}$ where 
$(E_n)$  are  i.i.d.\  exponential  random  variables  with parameter  $1$  and  $V$  some
independent random  variable related to the limit  of a martingale. The  randomness of the
probability vector $(P_n)$ has two components: one which is a part of an i.i.d.\ sequence,
changing  from one bin  to another,  and the  other being  ``fixed once  for all''  inducing a
dependency  structure.   As  it  will  be  seen, the  two  components  have  separately  a
significant impact on the qualitative behavior of this model.

\subsection*{Convergence in Distribution and Rare Events}
Because the  variables $(E_n)$ can be  arbitrarily small with  positive probability, empty
bins  are  likely  to  be  created  earlier  (i.e.,  with  smaller  indices)  than  for  a
deterministic probability vector with the same power  law decay. It is shown in fact that,
for the  convergence in  distribution, the first  empty bins  occur around indices  of the
order of $n^{1/(\rho+2)}$ instead of $(n/\log n)^{1/(1+\rho)}$ in the deterministic case.

The variable  $V$ has a more  subtle impact, when $\rho>1$  it is shown that,  due to some
heavy  tail property  of $V^{-1}$,  rare  events affect  the asymptotic  behavior of  {\em
  averages} of  some of the  characteristics.  For $\alpha  \in [1/(2\rho+1),1/(\rho+2))$,
  despite that the number of empty bins with indices of order $n^\alpha$ converges {\em in
    distribution}  to  $0$,  the  corresponding  average  converges  to  $+\infty$.   When
  $\rho<1$, the average  is converging to $0$.  A phase  transition phenomenon at $\rho=1$
  has been identified through simulations in a related context, communication networks, in
  Saddi  and Guillemin~\cite{Saddi:01}.  It  is not  apparent  as long  as convergence  in
  distribution is concerned but it shows  up when average quantities are considered.  This
  phenomenon is due to rare events related  to the total size of the $\lfloor \rho\rfloor$
  first  bins: On these  events, the  indices of  the first  empty bins  are of  the order
  $n^{1/(2\rho+1)}\ll n^{1/(\rho+2)}$  and a  lot them are  created at this  occasion. See
  Proposition~\ref{kr}  and  Corollary~\ref{corolkr}  for  a  precise  statement  of  this
  result. Concerning the  generality of the results obtained, it is  believed that some of
  them hold in  a more general setting, for the underlying  branching process for example,
  see Section~\ref{secgen}.

\subsection*{Point Processes}
Technically, one mainly uses point processes on $\R_+$ to describe the asymptotic behavior
of the indices of the first empty bins and not only  the index of the first
one (or  the subsequent ones) as  it is usually the  case in the literature.  It turns out
that it is quite appropriate in our setting  to get a full description of the set of
the first empty bins  and, moreover, it reduces the technicalities of  some of the proofs.
One of the  arguments for the proofs of the convergence  results is a simple
convergence result of two-dimensional point processes to Poisson point process with some
intensity measure. 
A one-dimensional equivalent  of this point of view  is implicit in most of  the papers of
the  literature, in  Hwang  and  Janson~\cite{Hwang07:0} in  particular.   See Robert  and
Simatos~\cite{Robert:IP} for  a presentation of  an extension of  this approach in  a more
general framework.

The paper  is organized as  follows.  Section~\ref{UBsec} introduces the  stochastic model
investigated.   The main  results concerning  convergence  of related  point processes  in
$\R_+^2$ are  presented in Section~\ref{secpp}.   Convergence results for the  indices of
empty   bins   are  proved   in   Section~\ref{seclimit}.  Section~\ref{secrare}
investigates in detail the case $\rho\geq 1$. Section~\ref{secgen} presents some possible
extensions.  

\section{A Bins and Balls Problem in Random Environment}\label{UBsec}
\noindent
The stochastic model is described in detail and some notations are introduced.
\subsection*{The Bins}
Let $(E_i)$ be a sequence of i.i.d.\ exponential random variables with parameter $1$. Define the
non-decreasing sequence  $(t_n)$ by, for $n\geq 1$,
\[
t_n=\sum_{i=1}^n \frac{1}{i}E_i.
\]
It is easy to check that for $x\geq 0$,
\begin{equation}\label{eqtn}
\P( t_n\leq x)=\P(\max(E_1, E_2, \ldots, E_n)\leq x)= (1-e^{-x})^n.
\end{equation}
The $n$th bin will be identified by the interval $(t_{n-1},t_n]$.

If $H_n=1+1/2+\cdots+1/n$ is the $n$th harmonic number, since $(t_n-H_n)$ is a
square integrable martingale whose increasing process is given by 
\[
\E\left((t_n-H_n)^2\right)=\sum_{i=1}^n \frac{1}{i^2},
\]
then $(M_n)\stackrel{\text{def.}}{=}(t_n-\log n)$ is almost surely converging to some
finite random variable $M_\infty$. See Neveu~\cite{Neveu:18} or
Williams~\cite{Williams91:0}. By using Equation~\eqref{eqtn}, it is not difficult to get
that the distribution of $M_\infty$ is given by  
\begin{equation}\label{Minfty}
\P(M_\infty\leq x)=\exp\left(-e^{-x}\right), \quad x\in\R.
\end{equation}
An alternative description of the sequence $(t_n)$ is provided by the split times of a
Yule (branching) process starting with one individual. See Athreya and Ney~\cite{Athreya:04}.

\subsection*{The Balls}
The locations of the balls are given by an independent sequence $(B_j)$ of
i.i.d.\ exponential random variables with parameter $\rho$ for some $\rho>0$.

Conditionally on the point process $(t_n)$ associated with the location of bins, the
probability that a given ball falls into the $n$th bin $(t_{n-1},t_n]$ is given by
\[
P_n=\P\left[B_1\in(t_{n-1},t_n]\left|\rule{0mm}{4mm} (t_n)\right]\right.
=e^{-\rho t_{n-1}}-e^{-\rho t_{n}}=e^{-\rho t_{n-1}}\left(1-e^{-\rho E_{n}/n}\right).
\]
This quantity can be rewritten as 
\begin{equation}\label{Pn}
P_n= \frac{1}{n^{\rho+1}} W_n^\rho Z_n,\text{ with }
 Z_n=n\left(1-e^{-\rho E_n/n}\right)
\text{ and } W_n=e^{-M_{n-1}}.
\end{equation}
The variables $W_n$ and $Z_n$ are independent random variables with different behavior. 
\begin{enumerate}
\item The variables $(Z_n)$ are independent and
converge in distribution to an exponentially distributed random variable with parameter
$1/\rho$. 
\item The random variables $(W_n)$ converge almost surely to the finite random variable
$W_\infty=\exp(-M_\infty)$ which is exponentially distributed with parameter $1$.
\end{enumerate}
This suggests an asymptotic representation of the sequence  $(P_n)$ as
\begin{equation}\label{asympt}
P_n\sim\frac{1}{n^{\rho+1}} W_\infty^\rho \widetilde{E}_n,
\end{equation}
where $(\widetilde{E}_n)$ is an i.i.d.\ sequence of exponential random variables with mean
$\rho$  independent of $W_\infty$.   The sequence  $(P_n)$ has  a power  law decay  with a
random  coefficient consisting  of  the product  of  two terms:  a  fixed random  variable
$W_\infty^\rho$ and  the other being  an element  of an i.i.d.\  sequence.  As it  will be
seen, these two terms have a significant  impact on the bins and balls problem studied in
this paper.

\begin{comment}
The  random variables $Z_n$ are not  independent of $W_{n+k}$, $k\geq  1$ (both having
the    random     variable    $E_n$    in    common).    If     the    above    asymptotic
representation~\eqref{asympt} holds for $n$ large, a similar asymptotic representation for
{\em the sequence } $(P_i)$ such as
\[
(P_i, i\geq n) \sim \left(\frac{C}{i^{\rho+1}} W_\infty^\rho \widetilde{E}_i, i\geq n\right)
\]
is not valid anymore. In the analysis of the location the first empty bin, as it will
be seen, the functionals of interest involve the distribution of the sequence $(P_n)$ rather
than some finite subset of marginals. 
\end{comment}

\section{Convergence of Point Processes}\label{secpp}
One of the main result,  Theorem~\ref{Res} in the next section, which  establishes convergence
results for  the indices of  the first  empty bins is  closely related to  the asymptotic
behavior of the point process $\{(i/n^{1/(2+\rho)},nP_i), i\geq 1\}$ on $\R_+^2$. For this
reason, some results  on convergence of point processes in $\R_+^2$  are first proved. The
point process  associated to the $(nP_i)$  appears quite naturally, especially  in view of
the  Poisson transform used  in the  proof of  Theorem~\ref{Res}. This  is also  a central
variable in Hwang and Janson~\cite{Hwang07:0} in some cases. 

An important tool to  study point processes in $\R_+^d$ for some  $d\geq 1$ is the Laplace
transform:  If ${\cal  N}{=}\{t_n, n\geq  1)$ is  a point  process and  $f$ a  function in
$C_c^+(\R_+^d)$, the set  of non-negative continuous functions with  a compact support, it
is defined as $\E(\exp(-{\cal N}(f)))$, where
\[
{\cal N}(f)\stackrel{\text{def.}}{=}-\sum_{n\geq 1} f(t_n).
\]
This functional uniquely determines the distribution of ${\cal N}$ and it is a key tool to
establish convergence results.   See Neveu~\cite{Neveu:12} and Dawson~\cite{Dawson:16} for
a comprehensive  presentation of these questions.  In the following, the quantity  ${\cal
 N}(A)$  denotes the number of $t_n$'s in the subset $A$ of $\R_+^d$.

The main results of this section establish convergence in distribution to mixed Poisson
point processes, i.e., distributed as a Poisson point process with a parameter which is a
random variable.   A natural tool in  this domain is the  Chen-Stein approach which
gives the convergence in distribution and, generally, quite good bounds on the convergence
rate. See Chapter~10 of Barbour \etal~\cite{Barbour:01} for example. This has been used in
Simatos \etal~\cite{Simatos08:0}, when the  probability vector is deterministic.  For some
of the results  of this section, this approach can probably  also be used.  Unfortunately,
due to the  almost surely converging sequence $(W_n)$ creating  a dependency structure, it
does not  seem that the main convergence  result, Theorem~\ref{theoPn}, can be  proved in a
simple way  by using Chen-Stein's  method. The main  problem being of conditioning  on the
variable  $W_\infty$ and keeping  at the  same time  upper bounds  on the  total variation
distance converging to $0$.

\subsection*{Condition~C}
A sequence of independent random variables $(X_i)$ satisfies Condition~C if there exist
some $\alpha>0$ and $\eta>0$ such that, for all $i\geq 1$, 
\begin{equation}\label{cond}
|\P(X_i\leq x)-\alpha x|\leq C x^2, \text{ when } 0\leq x\leq \eta.
\end{equation}
The following proposition is a preliminary result that will be used to prove the
main convergence results for the indices of the first empty bins. 
\begin{prop}[Convergence to a Poisson process]\label{cvPoispp}
For $\delta>0$ and $n\geq 1$, let ${\cal P}_n$ be the point process on $\R_+^2$ defined by 
\[
{\cal P}_n\stackrel{\text{def.}}{=}\left\{\left(\frac{i}{n^{1/(\delta+1)}},\frac{n}{i^{\delta}}X_i\right), i\geq 1\right\}, 
\]
where $(X_i)$ a  sequence of non-negative independent random  variables satisfying
Condition~C. Then  the sequence of 
point processes $({\cal P}_n)$ converges in distribution to a Poisson point process ${\cal
  P}$ in $\R_+^2$ with intensity  measure $x^{\delta}dx\,dy$ on $\R_+^2$. In particular,
its Laplace transform is given by
\begin{equation}\label{Laplace}
\E(\exp[-{\cal   P}(f)])=\exp\left(-\alpha\int_{\R_+^2}\left(1-e^{-f(x,y)}\right)x^{\delta}\,dx\,dy\right),
\quad f\in C_c^+(\R_+^2).
\end{equation}
\end{prop}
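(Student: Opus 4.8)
The plan is to establish convergence in distribution of the point processes $({\cal P}_n)$ by showing convergence of their Laplace transforms to the right-hand side of~\eqref{Laplace}, since the Laplace transform characterizes the distribution of a point process and convergence of Laplace transforms on $C_c^+(\R_+^2)$ implies convergence in distribution. Fix $f\in C_c^+(\R_+^2)$ with support contained in $[0,a]\times[0,b]$ for some $a,b>0$. Then only indices $i$ with $i\leq a\, n^{1/(\delta+1)}$ contribute to ${\cal P}_n(f)$, and by independence of the $(X_i)$ one has
\[
\E\bigl(\exp[-{\cal P}_n(f)]\bigr)=\prod_{1\leq i\leq a\,n^{1/(\delta+1)}}\E\left(\exp\left[-f\left(\frac{i}{n^{1/(\delta+1)}},\frac{n}{i^{\delta}}X_i\right)\right]\right)
=\prod_{1\leq i\leq a\,n^{1/(\delta+1)}}\bigl(1-a_{i,n}\bigr),
\]
where $a_{i,n}=\E\bigl(1-\exp[-f(i/n^{1/(\delta+1)},(n/i^{\delta})X_i)]\bigr)$.

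The next step is to estimate $a_{i,n}$. For $i$ in the relevant range, $n/i^{\delta}\geq n/(a\,n^{1/(\delta+1)})^{\delta}$ is large, so the event that $f$ is nonzero at the point forces $X_i\leq b\,i^{\delta}/n$, which is smaller than $\eta$ for $n$ large; thus Condition~C applies and gives the density of $X_i$ near $0$ as essentially $\alpha$, with an error controlled by $Cx^2$. Writing $a_{i,n}=\int_0^{b\,i^{\delta}/n}(1-e^{-f(i/n^{1/(\delta+1)},(n/i^{\delta})x)})\,d\P(X_i\leq x)$ and changing variables $y=(n/i^{\delta})x$ turns this into $(i^{\delta}/n)\int_0^{b}(1-e^{-f(i/n^{1/(\delta+1)},y)})\,\alpha\,dy$ up to a correction of order $(i^{\delta}/n)^2$ coming from the $Cx^2$ term. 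Then $\sum_i a_{i,n}$ is a Riemann sum: with $x=i/n^{1/(\delta+1)}$, so that $i^{\delta}/n = x^{\delta}/n^{1-\delta/(\delta+1)}=x^{\delta}/n^{1/(\delta+1)}$ equals $x^{\delta}\cdot(\text{mesh})$ where the mesh is $n^{-1/(\delta+1)}$, one gets
\[
\sum_{1\leq i\leq a\,n^{1/(\delta+1)}} a_{i,n}\longrightarrow \alpha\int_0^{a}\int_0^{b}\bigl(1-e^{-f(x,y)}\bigr)x^{\delta}\,dy\,dx=\alpha\int_{\R_+^2}\bigl(1-e^{-f(x,y)}\bigr)x^{\delta}\,dx\,dy,
\]
and the total error is a sum of $O(n^{1/(\delta+1)})$ terms each of size $O(n^{-2/(\delta+1)})$, hence $O(n^{-1/(\delta+1)})\to0$. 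Finally, since $\max_i a_{i,n}\to0$, the standard estimate $\bigl|\log\prod(1-a_{i,n})+\sum a_{i,n}\bigr|\leq \sum a_{i,n}^2/(1-\max a_{i,n})\to0$ converts the sum asymptotics into the product asymptotics, yielding $\E(\exp[-{\cal P}_n(f)])\to\exp(-\alpha\int(1-e^{-f})x^{\delta}dx\,dy)$, which is the Laplace transform of the claimed Poisson process.

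The main obstacle I anticipate is the bookkeeping around the smallest indices $i$ (of order $1$), where $n/i^{\delta}$ is largest but $i^{\delta}/n$ is smallest: one must check that Condition~C is still usable there — it is, since the restriction $X_i\leq b\,i^{\delta}/n\leq b/n$ is even stronger — and that these terms contribute negligibly to both the main Riemann sum (they do, as the sum is dominated by $i$ of order $n^{1/(\delta+1)}$) and to the error. A second point requiring a little care is justifying the Riemann-sum convergence uniformly enough despite the weight $x^{\delta}$ and the fact that $f$ is merely continuous with compact support (hence uniformly continuous), and confirming that the $Cx^2$-correction in Condition~C does not depend on $i$, so the error bound is genuinely uniform. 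None of these steps is deep, but assembling them cleanly is where the work lies.
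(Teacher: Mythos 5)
Your overall strategy is the same as the paper's: compute the Laplace functional, use independence to get a product, expand the logarithm (your bound $\sum a_{i,n}^2=O(n^{-1/(\delta+1)})$ matches the paper's), and identify $\sum_i a_{i,n}$ as a Riemann sum converging to $\alpha\int(1-e^{-f})x^{\delta}\,dx\,dy$. The gap is in the central estimate of $a_{i,n}$: you invoke Condition~C as if it ``gives the density of $X_i$ near $0$ as essentially $\alpha$, with an error controlled by $Cx^2$,'' and then write $d\P(X_i\leq x)\approx\alpha\,dx$ in the change of variables. Condition~C is a statement about the \emph{distribution function} only; it does not imply that $X_i$ has a density at all (a purely atomic law with sufficiently dense, small atoms satisfies $|\P(X_i\leq x)-\alpha x|\leq Cx^2$ near $0$). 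So the step turning $\int_0^{bi^{\delta}/n}\bigl(1-e^{-f(x_i,(n/i^{\delta})x)}\bigr)\,d\P(X_i\leq x)$ into $(i^{\delta}/n)\,\alpha\int_0^{b}(1-e^{-f(x_i,y)})\,dy+O((i^{\delta}/n)^2)$ is exactly the point that needs an argument, and as written it is not justified; it is not merely ``a little care'' about Riemann sums.

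Two ways to close it. The paper's route: first take $f$ differentiable in $y$, so that by Fubini
\[
\E\Bigl(1-e^{-f(x_i,\,nX_i/i^{\delta})}\Bigr)=-\int_0^{+\infty}\frac{\partial}{\partial y}\Bigl(1-e^{-f(x_i,y)}\Bigr)\,\P\bigl(X_i\leq y\,i^{\delta}/n\bigr)\,dy,
\]
which lets Condition~C act directly on the CDF; then extend from this dense class to all of $C_c^+(\R_+^2)$ by a tightness argument (bounding $\sup_n\E({\cal P}_n(H))$ for compact $H$) and identification of the limit. Alternatively, you can stay with general continuous $f$ but replace the density heuristic by a Riemann--Stieltjes comparison: partition $[0,b]$ in the $y$-variable into finitely many intervals on which $f(x_i,\cdot)$ oscillates by at most $\eps$ (uniformly in $i$, by uniform continuity), approximate the Stieltjes integral on each piece by the value of the integrand times an increment of the CDF, and use Condition~C at the finitely many partition points; the CDF errors contribute $O((i^{\delta}/n)^2)$ per index, summing to $O(n^{-1/(\delta+1)})$, and the oscillation errors contribute $O(\eps)$ overall. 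Either completion makes your argument correct; without one of them the key estimate is asserted rather than proved.
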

\noindent
See Robert~\cite{Robert:08} for the definition and the main properties of Poisson
processes in general state spaces. 
\noindent
\begin{proof}
There exists some $\eta_0>0$ such that $\P(X_i\leq x)\leq 2\alpha x$ for $0\leq x\leq
\eta_0$ and all $i\geq 1$.
Let $f\in C_c^+(\R_+^2)$ be such that $f$ is differentiable with respect to the second
variable. There is some $K>0$ so that the support of $f$ is included in
$[0,K]\times[0,K]$,  define $g(x,y)=1-\exp(-f(x,y))$, then by independence of the
variables $X_i$, $i\geq 1$, 
\[
\log \E\left(e^{-{\cal   P}_n(f)}\right)=\sum_{i=1}^{+\infty}
\log\left(1-\E\left[g\left(\frac{i}{n^{1/(\delta+1)}},\frac{n}{i^{\delta}}X_i\right)\right]\right).
\]
Since
\[
\E\left[g\left(\frac{i}{n^{1/(\delta+1)}},\frac{n}{i^{\delta}}X_i\right)\right]\leq 
\P\left(X_i\leq K\frac{i^{\delta}}{n}\right)\ind{i\leq K n^{1/(\delta+1)}},
%%&\leq \P\left(X_1\leq \frac{K}{n^{\rho/(\delta+1)}}\right)\ind{i\leq K n^{1/(\delta+1)}}.
\]
the elementary inequality $|\log(1{-}y)+y|\leq 
3y^2/2$ valid for $0\leq y\leq 1/2$ shows that there exists some $n_0\geq 1$ such that
\begin{multline*}
\left|\log \E\left(e^{-{\cal P}_n(f)}\right)
+\sum_{i=1}^{+\infty}\E\left[g\left(\frac{i}{n^{1/(\delta+1)}},\frac{n}{i^{\delta}}X_i\right)\right]\right|\\
\leq \frac{6(\alpha K)^2}{n^2}\sum_{i=1}^{\lfloor Kn^{1/(\delta+1)}  \rfloor}i^{2\delta}
\leq {6}\alpha^2 K^{2\delta+3}\frac{1}{n^{1/(\delta+1)}}
\end{multline*}
holds for $n\geq n_0$.
It is therefore enough to study the asymptotics of the series of the left hand side of the
above inequality. For $x\geq 0$, by using Fubini's Theorem, one gets
\[
\E\left(g\left(x,\frac{n}{i^{\delta}}X_i\right)\right)=
-\int_0^{+\infty}\frac{\partial g}{\partial y} (x,y)\P\left(X_i\leq y i^{\delta}/n\right)\,dy.
\]
By using again Condition~C, one obtains that the log of
the Laplace transform of ${\cal P}_n$ has the same asymptotic behavior as
\[
-\alpha \frac{1}{n^{1/(\delta+1)}} \sum_{i=1}^{+\infty}\int_0^{+\infty}\frac{\partial
  g}{\partial y} \left(\frac{i}{n^{1/(\delta+1)}},y\right) y
\left(\frac{i}{n^{1/(\delta+1)}}\right)^{\delta}\,dy 
\]
which is a Riemann sum converging to
\[
-\alpha \int_{\R_+^2}\frac{\partial g}{\partial y} (x,y)yx^{\delta}\,dxdy=\alpha \int_{\R_+^2}\left(1-e^{-f(x,y)}\right)x^{\delta}\,dxdy.
\]
This shows in particular that for any compact set $H$ of $\R_+^2$, then
\[\sup_{n\geq 1}\E({\cal P}_n(H))<+\infty,\] the sequence $({\cal P}_n)$ is therefore
tight for the weak topology, see Dawson~\cite{Dawson:16}. 

By the convergence result, if ${\cal P}$ is any limiting point of
the sequence $({\cal P}_n)$, for any function $f\in C_c^+(\R_+^2)$ such that $y\to f(x,y)$
is differentiable, then the Laplace 
transform of ${\cal P}$ at $f$ is given by the right hand side of
Equation~\eqref{Laplace}. By density of these functions $f$ in $C_c^+(\R_+^2)$ for the
uniform topology, this implies that ${\cal P}$ is indeed a Poisson point process with
intensity measure $x^{\delta}\,dxdy$ on $\R_+^2$. The proposition is proved. 
\end{proof}
The above  result can be (roughly)  restated as follows: for  the indices of  the order of
$n^{1/(\delta+1)}$,  the  points $nX_i/i^\delta$,  lying  in  some  finite fixed  interval
converge  to an  homogeneous  Poisson point  process.  The following  proposition gives  an
asymptotic  description of  the indices of the points  $nX_i/i^\delta$  but for  indices of  the order  of
$n^{\kappa}$ with $\kappa>1/(\delta+1)$. It shows  that, on finite intervals, these points
accumulate at  rate $n^{(1+\delta)\kappa-1}$ according  to the Lebesgue measure  with some
density.
\begin{prop}[Law of Large Numbers]
If, for $\kappa>1/(1+\delta)$ and for $n\in\N$,  ${\cal P}_n^\kappa$ is the point process
on $\R_+$ defined by
\[
{\cal P}_n^\kappa(f)=\frac{1}{n^{(1+\delta)\kappa-1}}
\sum_{i=1}^{+\infty}f\left(\frac{i}{n^\kappa},\frac{n}{i^\delta}X_i\right), 
\quad f\in C_c^+(\R_+^2),
\]
where $(X_i)$  is a sequence of non-negative independent random  variables satisfying
Condition~C, then the sequence 
$({\cal P}_n^\kappa)$ converges in distribution to the deterministic measure ${\cal
  P}_\infty^\kappa$ defined by  
\[
{\cal P}_\infty^\kappa(f)=\alpha\int_{\R_+^2}f(x,y)x^\delta\,dxdy,\quad f\in C_c^+(\R_+^2). 
\]
\end{prop}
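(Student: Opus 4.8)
The plan is to establish convergence to the deterministic limit via the Laplace transform, exactly as in Proposition~\ref{cvPoispp}, but now tracking the rescaled functional ${\cal P}_n^\kappa(f)$, which is a (normalized) sum rather than a product over bins. Since $({\cal P}_n^\kappa)$ is a sum of independent contributions divided by $n^{(1+\delta)\kappa-1}\to\infty$, the natural strategy is to show that $\E({\cal P}_n^\kappa(f))$ converges to ${\cal P}_\infty^\kappa(f)$ and that $\mathrm{Var}({\cal P}_n^\kappa(f))\to 0$; this gives ${\cal P}_n^\kappa(f)\to{\cal P}_\infty^\kappa(f)$ in $L^2$, hence in distribution, for each fixed $f$, and then the usual argument (convergence of Laplace functionals on $C_c^+$, plus tightness) upgrades this to convergence of the point processes.

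The first step is the mean computation. Writing $f$ for a function supported in $[0,K]\times[0,K]$ and differentiable in the second variable, one has
\[
\E\!\left(f\!\left(\frac{i}{n^\kappa},\frac{n}{i^\delta}X_i\right)\right)
= -\int_0^{+\infty}\frac{\partial f}{\partial y}\!\left(\frac{i}{n^\kappa},y\right)
\P\!\left(X_i\le \frac{y\,i^\delta}{n}\right)\,dy,
\]
by Fubini as in the previous proof. For the relevant range $i\le K n^\kappa$ one has $y\,i^\delta/n\le K^{\delta+1}n^{\delta\kappa-1}\to 0$ (using $\kappa<1/\delta$ when $\delta\ge 1$, or trivially when $\delta<1$; in any case $\delta\kappa-1<\delta/(1+\delta)-1<0$), so Condition~C applies and $\P(X_i\le y i^\delta/n)=\alpha y i^\delta/n + O\big((y i^\delta/n)^2\big)$. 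Substituting and multiplying by $n^{1-(1+\delta)\kappa}$, the leading term is
\[
\alpha\,\frac{1}{n^{(1+\delta)\kappa}}\sum_{i\ge 1}\left(-\int_0^{+\infty}\frac{\partial f}{\partial y}\!\left(\frac{i}{n^\kappa},y\right) y\left(\frac{i}{n^\kappa}\right)^\delta dy\right)
\cdot\frac{1}{n^{0}},
\]
wait --- more precisely, after pulling out $i^\delta/n^{\delta\kappa}=(i/n^\kappa)^\delta$ and the factor $n^{1-(1+\delta)\kappa}$, one is left with $\alpha n^{-\kappa}\sum_{i\ge1}h(i/n^\kappa)$ where $h(x)=-\int_0^\infty \partial_y f(x,y)\,y\,x^\delta\,dy=\int_0^\infty f(x,y)x^\delta\,dy$, a Riemann sum converging to $\alpha\int_0^\infty h(x)\,dx={\cal P}_\infty^\kappa(f)$. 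The error term from Condition~C contributes $O(n^{-\kappa})$ after normalization (sum of $O((i^\delta/n)^2)$ over $O(n^\kappa)$ indices, times $n^{1-(1+\delta)\kappa}$), which vanishes.

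The second step is the variance bound. By independence, $\mathrm{Var}({\cal P}_n^\kappa(f)) = n^{2-2(1+\delta)\kappa}\sum_{i\ge1}\mathrm{Var}\big(f(i/n^\kappa,\, n X_i/i^\delta)\big)\le n^{2-2(1+\delta)\kappa}\sum_{i\ge1}\E\big(f(\cdots)^2\big)$; since $0\le f\le \|f\|_\infty$ the summand is bounded by $\|f\|_\infty^2\,\P(X_i\le K i^\delta/n)\le 2\alpha\|f\|_\infty K i^\delta/n$ for $i\le Kn^\kappa$, so the sum is $O(n^{(1+\delta)\kappa-1})$ and the variance is $O(n^{1-(1+\delta)\kappa})\to 0$ since $\kappa>1/(1+\delta)$. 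This is the step where the hypothesis $\kappa>1/(1+\delta)$ is essential and it is really the crux of the "law of large numbers" phenomenon. Having $L^2$ convergence of ${\cal P}_n^\kappa(f)$ to the constant ${\cal P}_\infty^\kappa(f)$ for every differentiable-in-$y$ test function $f\in C_c^+(\R_+^2)$, the Laplace functionals $\E(\exp(-{\cal P}_n^\kappa(f)))\to \exp(-{\cal P}_\infty^\kappa(f))$ converge, and density of such $f$ in $C_c^+(\R_+^2)$ together with the uniform bound $\sup_n\E({\cal P}_n^\kappa(H))<\infty$ on compacts $H$ (tightness, as in the previous proof, via Dawson~\cite{Dawson:16}) identifies the limit as the deterministic measure ${\cal P}_\infty^\kappa$. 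I expect the main obstacle to be purely bookkeeping: justifying that the Condition~C remainder and the Riemann-sum error are uniformly negligible after the $n^{1-(1+\delta)\kappa}$ normalization, and handling the edge case $\delta\ge 1$ where one must check $i^\delta/n\to 0$ over the summation range.
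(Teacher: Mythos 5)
Your proposal follows essentially the same route as the paper's proof: the mean is computed through the Fubini/integration-by-parts representation, Condition~C turns it into a Riemann sum converging to $\alpha\int f(x,y)x^\delta\,dx\,dy$, the fluctuation is killed by independence together with the normalization $n^{(1+\delta)\kappa-1}$ (this is where $\kappa>1/(1+\delta)$ enters, exactly as in the paper; the paper bounds the second moment of the centered sum via Cauchy--Schwarz on the $\partial f/\partial y$ representation while you bound $\E(f^2)$ by $\|f\|_\infty^2\P(X_i\le Ki^\delta/n)$, an immaterial difference), and the passage from $y$-differentiable test functions to all of $C_c^+(\R_+^2)$ is by relative compactness and identification of the limit, as in Proposition~\ref{cvPoispp}.

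One caveat on the point you yourself flagged as the main bookkeeping obstacle: your justification that Condition~C applies over the whole range $i\le Kn^\kappa$ rests on the inequality $\delta\kappa-1<\delta/(1+\delta)-1<0$, which is backwards. The hypothesis is $\kappa>1/(1+\delta)$, so $\delta\kappa>\delta/(1+\delta)$, and the statement puts no upper bound on $\kappa$. The uniform smallness of $y\,i^\delta/n$ on the summation range holds only when $\kappa<1/\delta$; when $\kappa\ge 1/\delta$ the arguments $yi^\delta/n$ exceed $\eta$ for a positive fraction of the indices, the probabilities saturate, and the argument as written breaks down --- in fact the stated limit itself can fail there (take $X_i$ uniform on $[0,1]$: the normalized number of points in $[0,K]^2$ is $O(n^{1-\delta\kappa})\to 0$, not $\alpha K^{\delta+2}/(\delta+1)$). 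The paper's proof is equally silent on this regime, simply invoking ``as in the previous proof''; so within $1/(1+\delta)<\kappa<1/\delta$ your argument is correct and coincides with the paper's, but the sentence claiming the smallness holds ``in any case'' should be corrected or the range of $\kappa$ restricted.
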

\begin{proof}
Let $f\in  C_c^+(\R_+^2)$ be such  that $f$ is  differentiable with respect to  the second
variable.  As before,  the  convergence result  is proved  for  such a  function $f$,  the
generalization to an arbitrary function $f\in C_c^+(\R_+^2)$ follows the same lines as the
previous proof (relative compactness argument and identification of the limit).   Let  $K>0$   such
that   the  support   of   $f$   is  included   in $[0,K]\times[0,K]$. One has 
\[
\E\left({\cal P}_n^\kappa(f)\right)= -\frac{1}{n^{(1+\delta)\kappa-1}} \sum_{i=1}^{+\infty} \int_0^{+\infty}
\frac{\partial f}{\partial y}\left(\frac{i}{n^\kappa},y\right)\P(X_i\leq yi^\delta/n)\,dy,
\]
as in the previous proof, by using Condition~\eqref{cond}, one gets that
\[
\E\left({\cal P}_n^\kappa(f)\right)\sim -\alpha \frac{1}{n^\kappa} \sum_{i=1}^{+\infty} \int_0^{+\infty}
\frac{\partial f}{\partial y}\left(\frac{i}{n^\kappa},y\right)y\left(\frac{i}{n^\kappa}\right)^\delta\,dy,
\]
therefore,
\[
\lim_{n\to+\infty} \E\left({\cal P}_n^\kappa(f)\right)=
-\alpha\int_0^{+\infty}\int_0^{+\infty} \frac{\partial f}{\partial y}\left(x,y\right)y
x^\delta\,dxdy= \alpha\int_{\R_+^2}f(x,y)x^\delta\,dxdy.
\]
By independence of the $X_i$'s the second moment of the difference
\begin{multline*}
{\cal P}_n^\kappa(f)-\E\left({\cal P}_n^\kappa(f)\right)
\\= -\frac{1}{n^{(1+\delta)\kappa-1}} \sum_{i=1}^{+\infty} \int_0^{+\infty}
\frac{\partial f}{\partial y}\left(\frac{i}{n^\kappa},y\right)\left[\ind{X_i\leq yi^\delta/n}-\P(X_i\leq yi^\delta/n)\right]\,dy,
\end{multline*}
can be expressed as
\begin{align*}
&n^{2((1+\delta)\kappa-1)}\times\E\left(\left[{\cal P}_n^\kappa(f)-\E\left({\cal P}_n^\kappa(f)\right)\right]^2\right)\\
&= \sum_{i=1}^{+\infty} \E\left(\left[\int_0^{+\infty}
\frac{\partial f}{\partial
  y}\left(\frac{i}{n^\kappa},y\right)\left[\ind{X_i\leq yi^\delta/n}-\P(X_i\leq yi^\delta/n)\right]\,dy\right]^2\right)\\
&\leq K\sum_{i=1}^{+\infty} \int_0^{+\infty}
\left[\frac{\partial f}{\partial  y}\left(\frac{i}{n^\kappa},y\right)\right]^2\E\left(\left[\ind{X_i\leq yi^\delta/n}-\P(X_i\leq yi^\delta/n)\right]^2\right)\,dy\\
&\leq K\sum_{i=1}^{+\infty} \int_0^{+\infty}
\left[\frac{\partial f}{\partial  y}\left(\frac{i}{n^\kappa},y\right)\right]^2\P(X_i\leq yi^\delta/n)\,dy,
\end{align*}
by Cauchy-Shwartz's Inequality. The last term is, with the same arguments as for the
asymptotics of $\E\left({\cal P}_n^\kappa(f)\right)$, equivalent to 
\[
K n^{{(1+\delta)\kappa-1}}\times \int_{\R_+^2}
\left[\frac{\partial f}{\partial  y}\left(x,y\right)\right]^2 y x^\delta\,dxdy.
\]
In particular, the sequence $({\cal P}_n^\kappa(f))$ converges in $L_2$ (and therefore in
distribution) to ${\cal  P}_\infty^\kappa(f)$. The proposition is proved. 

The main convergence result can now be established. 
\end{proof}
\begin{thm} \label{theoPn}
If, for $n\geq 1$, ${\cal P}_{n} $ is the point process on $\R_+^2$ defined by 
\[
{\cal P}_{n} =\left\{\left(\frac{i}{n^{1/(\rho+2)}},nP_i\right), i\geq 1\right\},
\]
 then the sequence $({\cal P}_n)$ converges in distribution and the relation
\begin{equation}\label{LapTrans}
\lim_{n \to +\infty} \E \left( e^{-{\cal P}_{n}(f)} \right) = 
\E\left[\exp\left(-\frac{W_\infty^{-\rho}}{\rho}
\int_{\R_+^2}\left(1-e^{-f(x,y)}\right)x^{\rho+1}\,dx\,dy\right)\right]
\end{equation}
holds for any $f\in C_c^+(\R_+^2)$. 
\end{thm}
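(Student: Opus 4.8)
The plan is to condition on the almost surely convergent sequence $(W_n)$ (equivalently on $M_\infty$, hence on $W_\infty$) and to show that, conditionally, the point process $\mathcal{P}_n$ behaves like the process $\mathcal{P}_n$ of Proposition~\ref{cvPoispp} applied to a rescaled i.i.d.\ sequence, so that the conditional Laplace transform converges to a (non-random, $W_\infty$-dependent) exponential, and then to average over $W_\infty$. Recall from~\eqref{Pn} that $nP_i = n^{-\rho}W_i^\rho Z_i$ with $W_i = e^{-M_{i-1}}$ and $Z_i = i(1-e^{-\rho E_i/i})$; the $(Z_i)$ are independent with $\P(Z_i \le x) \to 1 - e^{-x/\rho}$, and in fact $(Z_i)$ satisfies Condition~C with $\alpha = 1/\rho$ (expand $1-e^{-\rho E_i/i}$: $\P(Z_i \le x) = \P(E_i \le -\tfrac{i}{\rho}\log(1-x/i)) = 1-(1-x/i)^{i/\rho}$, whose first-order term is $x/\rho$ with a quadratic remainder, uniformly in $i$). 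The coordinate rescaling $i \mapsto i/n^{1/(\rho+2)}$ together with $nP_i = (n/i^{\rho})\cdot (W_i^\rho Z_i)/n^\rho$... the clean way is to write $\mathcal{P}_n = \{(i/n^{1/(\rho+2)}, (n/i^{\rho+1}) W_i^\rho Z_i)\}$ and compare with $\delta = \rho+1$ in Proposition~\ref{cvPoispp}.

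First I would replace $W_i$ by $W_\infty$. Since $W_i \to W_\infty$ a.s., for any $\eps > 0$ there is (random) $N$ with $|W_i - W_\infty| \le \eps$ for $i \ge N$; as the relevant indices in $\mathcal{P}_n(f)$ are of order $n^{1/(\rho+2)} \to \infty$, the finitely many terms $i < N$ are eventually outside the support of $f$ and contribute nothing, while for $i \ge N$ one has $W_i^\rho$ sandwiched between $(W_\infty \pm \eps)^\rho$. This reduces the problem, on the event $\{|W_i - W_\infty| \le \eps,\ \forall i\ge N\}$ (whose probability tends to $1$ as $N \to \infty$), to the point process $\{(i/n^{1/(\rho+2)}, (n/i^{\rho+1}) W_\infty^\rho Z_i)\}$. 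Now I would condition on $W_\infty$: conditionally on $W_\infty = w$, the sequence $(W_\infty^\rho Z_i)_{i \ge N}$ is, up to the irrelevant initial segment, an independent sequence; I would check it still satisfies Condition~C, now with $\alpha = 1/(\rho w^\rho)$ — the scaling by the constant $w^\rho$ multiplies the linear coefficient by $w^{-\rho}$ and the quadratic remainder is controlled on a shrunk neighbourhood $0 \le x \le \eta w^\rho$. Subtlety: under the conditional law given $W_\infty$, the variables $Z_i$ and the tail of $(W_j)$ are entangled (both involve the same $E_i$'s), so I would not literally condition on the whole $\sigma$-field of $(W_n)$; instead I would use the $\eps$-sandwich above to decouple, keeping only the conditioning on the scalar $W_\infty$, and absorb the error from $W_i \ne W_\infty$ into a term that vanishes as $\eps \to 0$ after $n \to \infty$.

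Given that reduction, Proposition~\ref{cvPoispp} (with $\delta = \rho+1$) applies to the conditioned sequence and yields, conditionally on $W_\infty = w$,
\[
\E\!\left(e^{-\mathcal{P}_n(f)} \,\middle|\, W_\infty = w\right) \longrightarrow \exp\!\left(-\frac{w^{-\rho}}{\rho}\int_{\R_+^2}\bigl(1-e^{-f(x,y)}\bigr)x^{\rho+1}\,dx\,dy\right),
\]
since scaling the second coordinate by the constant $w^\rho$ transforms the intensity $\alpha\, x^{\rho+1}\,dx\,dy$ with $\alpha = 1/\rho$ into $(w^{-\rho}/\rho)\,x^{\rho+1}\,dx\,dy$ (equivalently, one keeps intensity $x^{\rho+1}dxdy$ and replaces $f$ by $f(x, w^\rho y)$, then changes variables). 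Finally, the conditional Laplace transforms are bounded by $1$, so dominated convergence lets me integrate over the law of $W_\infty$ (which is $\exp(-1)$-exponential by~\eqref{Minfty}), giving exactly the right-hand side of~\eqref{LapTrans}. Tightness of $(\mathcal{P}_n)$ and the identification of every subsequential limit as the asserted mixed Poisson process follow as in the proof of Proposition~\ref{cvPoispp}, by density of second-coordinate-differentiable functions in $C_c^+(\R_+^2)$.

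The main obstacle is precisely the decoupling step in the second paragraph: making rigorous that one may condition on the \emph{single} random variable $W_\infty$ rather than on the full environment, despite the dependence between $W_\infty$ and the $(Z_i)$, and that the error incurred by the substitution $W_i \rightsquigarrow W_\infty$ — uniform over the $O(n^{1/(\rho+2)})$ relevant indices — is genuinely negligible in the limit. Everything else (verifying Condition~C for $(Z_i)$ and for $(w^\rho Z_i)$, the change-of-variables in the intensity, the tightness and identification) is routine given the two propositions already proved.
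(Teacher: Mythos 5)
There is a genuine gap, and you have in fact named it yourself without closing it: the step ``condition on the single random variable $W_\infty$ and apply Proposition~\ref{cvPoispp} to $(W_\infty^\rho Z_i)$'' does not work as stated. The hypothesis of Proposition~\ref{cvPoispp} is a sequence of \emph{independent} variables satisfying Condition~C, but under the conditional law given $W_\infty=w$ the variables $Z_i=i(1-e^{-\rho E_i/i})$ are neither independent nor distributed as before: $W_\infty=e^{-M_\infty}$ is a functional of the \emph{entire} sequence $(E_i)$, so conditioning on it biases every $Z_i$ jointly. Your $\eps$-sandwich only replaces the varying environment $W_i$ by the fixed one $W_\infty$; it does nothing to decouple $W_\infty$ from the $(Z_i)$, which is the actual difficulty (this is also why the paper remarks that a naive Chen--Stein conditioning on $W_\infty$ fails). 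Note also that your discarding of the indices $i<N$ is justified by the second coordinate ($nP_i\to\infty$ for fixed $i$), not by the first, but that is a minor point.

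The paper's proof supplies exactly the missing mechanism, and it is not routine. One introduces a cutoff $\beta_n=\lfloor n^{1/(\rho+2)}/\log n\rfloor\to\infty$ and uses $W_{\beta_n}$, which is a function of $E_1,\dots,E_{\beta_n-1}$ only and hence \emph{exactly} independent of $(Z_i,\,i\geq\beta_n)$; the tail point process $\{(i/n^{1/(\rho+2)},\,nW_{\beta_n}^\rho Z_i/i^{\rho+1}),\,i\geq\beta_n\}$ then has the same law as a fully decoupled model $\mathcal{P}_n^2$ built from independent copies $(\widetilde W_i)$, $(\widetilde Z_i)$, to which Proposition~\ref{cvPoispp} applies conditionally on $\widetilde W_\infty$ (your Condition~C computation and the change of variables in the intensity are fine there, as is the dominated-convergence step). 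But this forces one to discard not finitely many indices but all $i\leq\beta_n$, a number growing with $n$, and proving that this initial segment is negligible is a quantitative estimate (Step~2 of the paper: conditioning on $t_{\lfloor\rho\rfloor}\leq\log\log n$, the inequality $\P(y^{-1}(1-e^{-yE})\leq x)\leq e(1-e^{-x})$, and the bound on $\E(e^{\rho(t_{i-1}-t_{\lfloor\rho\rfloor})})$, yielding a contribution $O((\log n)^{-2})$). Your proposal contains neither the cutoff idea nor any substitute for it, so the central reduction to Proposition~\ref{cvPoispp} remains unproved.
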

In other words  the point process ${\cal P}_{n}$ converges in  distribution to a 
mixed Poisson point process:  conditionally  on   $W_\infty$,  it  is  a  Poisson
process  with  intensity measure $W_\infty^{-\rho} x^{\rho+1}\,dx\,dy/\rho$.
\begin{proof}
The proof proceeds in several steps. The main objective of these steps is to decouple the
sequences $(W_i)$ and $(Z_i)$ defining the $(P_i)$ and then to apply Proposition~\ref{cvPoispp}.
\medskip
\paragraph{Step 1}
One defines the sequences
\[
P_i^1=\frac{1}{i^{\rho+1}}\widetilde{W}_{\infty}^\rho \widetilde{Z}_i, \quad i\geq 1, \quad
P_i^2=\frac{1}{i^{\rho+1}}\widetilde{W}_{\beta_n}^\rho \widetilde{Z}_i, \quad i\geq 1,
\]
where $(\beta_n)$ is some sequence of integers converging to $+\infty$.  The  sequences of
random variables $(\widetilde{W}_i,1\leq i\leq +\infty)$ and $(\widetilde{Z}_i)$ are
assumed to be independent and to 
have, respectively, the  same distribution as  $(W_i, 1\leq i\leq +\infty)$ and  $(Z_i)$ defined by
Equation~\eqref{Pn}. Recall that the sequence $(\widetilde{W}_i)$ converges almost  surely
to $\widetilde{W}_\infty$.  These sequences define point processes in the following way, for $j=1$ and $2$,  
\[
{\cal  P}_n^j=\left\{\left(\frac{i}{n^{1/(\rho+2)}},nP_i^j\right), i\geq 1\right\}.
\]
If $f$  is a non-negative continuous  function with compact support  on $\R_+^2$, because,
conditionally  on  $\widetilde{W}_\infty$,   the  variables  $(\widetilde{W}_\infty  Z_i)$
satisfy Condition~C  with $\alpha=\widetilde{W}_\infty^{-\rho}/\rho$,
Proposition~\ref{cvPoispp}, with $\delta=\rho+1$, shows
that
\[{}
\lim_{n\to+\infty} \E\left.\left(e^{-{\cal P}_n^1(f)}\right|\widetilde{W}_\infty\right)=
\exp\left(-\frac{\widetilde{W}_\infty^{-\rho}}{\rho}\int_{\R_+^2}\left(1-e^{-f(x,y)}\right)x^{\rho+1}\,dx\,dy\right).
\]
Because of the  boundedness of these quantities, by Lebesgue's Theorem, the same result
holds for the expected values. Therefore, the sequence $({\cal P}_n^1)$ converges in
distribution to the point process ${\cal   P}$ on $\R_+^2$ whose Laplace transform is
given by Equation~\eqref{LapTrans}.  

Let $K\geq 2$ be such that the support of $f$ is a subset of $[0,K]^2$
and $\eps>0$  .  Since  the limiting  point process ${\cal  P}$ is  almost surely  a Radon
measure, there  exists some $m\in\N$  such that $\P({\cal P}_n^1([0,2K]^2)\geq  m)\leq \eps$
for  all   $n\geq  1$.   By  uniform  continuity,   there  exists  $0<\eta<1/2$   such
that $|f(u)-f(v)|\leq \eps/m$ for $u$, $v\in\R_+^2$ such that $\|u-v\|\leq \eta$. For
$n\geq 1$, if 
\[
{\cal A}\stackrel{\text{def}}{=}\{|{\widetilde{W}_{\beta_n}^\rho}/{\widetilde{W}_{\infty}^\rho}-1|
\geq \eta/2K\}\cup \{{\cal P}_n^1([0,2K]^2)\geq  m\}
\]
then
\begin{multline*}
\left|\E\left(\exp\left[-{\cal P}_n^2(f)\right]\right)-\E\left(\exp\left[-{\cal
      P}_n^1(f)\right]\right)\right|\leq \P({\cal A})\\
+ \E\left(\left(\exp\left[\sum_{i\geq 1}  \left|f\left(\frac{i}{n^{1/(\rho+2)}},\frac{\widetilde{W}_{\beta_n}^\rho}{\widetilde{W}_{\infty}^\rho}nP_i^1\right)
- f\left(\frac{i}{n^{1/(\rho+2)}},nP_i^1\right)\right|\right]-1\right){\mathbbm 1}_{{\cal
    A}^c}\right)\\
\leq \P(|W_{\beta_n}^\rho/W_{\infty}^\rho-1|
\geq \eta/2K)+2\eps,
\end{multline*}
hence, by the almost sure convergence of $(W_n)$ to $W_\infty$, the right hand side of the last relation can be arbitrarily small as $n$ goes to
infinity. One concludes that the sequence $({\cal P}_n^2)$ also converges in distribution to
the  point process ${\cal P}$.

\medskip

\paragraph{Step 2}
For $n\geq 1$, define
\[
 \beta_n = \left \lfloor {n^{1/(\rho+2)}}/{\log n} \right \rfloor,
\]
then it will be shown that the  point processes 
\[
{\cal Q}_n=\left\{\left(\frac{i}{n^{1/(\rho+2)}},nP_i\right), 1\leq i\leq \beta_n\right\} 
\]
converge to the measure identically null. It is sufficient to prove that for any $f\in
C_c^+(\R_+)$, the sequence $({\cal Q}_{n}(f))$ converges in distribution to $0$. 
For a fixed $i$, the sequence $(nP_i)$ converges in distribution to infinity, since $f$ is
continuous with compact support and therefore bounded, one obtains that, in the definition
of ${\cal Q}_n$, it can be assumed that the indices $i$ are restricted to the set
$\{\lceil \rho\rceil, \ldots,\beta_n\}$. 

Let $K$ be such that the support of $f$ is included in $[0,K]^2$, if $u_n=\log\log n$,
for $i\geq \lceil \rho\rceil$, 
\[
\E\left(f(i/n^{1/(\rho+2)},nP_i)\ind{t_{\lfloor \rho\rfloor}\leq u_n}\right)\leq 
\|f\|_{\infty} \P\left(t_{\lfloor \rho\rfloor}\leq u_n, nP_i\leq K\right),
\]
since $P_i=e^{-\rho t_{\lfloor \rho\rfloor}}e^{-\rho(t_{i-1}-t_{\lfloor  \rho\rfloor})}\left(1-e^{-\rho E_i/i}\right)$, 
\begin{multline*}
\E\left(f(i/n^{1/(\rho+2)},nP_i)\ind{t_{\lfloor \rho\rfloor}\leq u_n}\right)\\\leq 
\|f\|_{\infty}\P\left[\left(\frac{1-e^{-\rho E_i/i}}{\rho/i}\right)\leq \frac{i}{\rho} Ke^{\rho
    u_n}e^{\rho(t_{i-1}-t_{\lfloor  \rho\rfloor})}/n\right].
\end{multline*}
By using the elementary inequality, if $E_1$ is exponentially distributed with mean $1$,
\begin{equation}\label{ineq1}
\P\left(\frac{1}{y}\left(1-e^{-y E_1}\right)\leq x\right)\leq e\left(1-e^{-x}\right),\quad y\leq 1, x\geq0,
\end{equation}
one gets that, for $i>\rho$,
\begin{align*}
\E\left(f(i/n^{1/(\rho+2)}, nP_i)\ind{t_{\lfloor \rho\rfloor}\leq u_n}\right)&\\
\leq e&\|f\|_{\infty} \,\E\left(1-\exp\left[-\frac{i}{n\rho}Ke^{\rho u_n}e^{\rho(t_{i-1}-t_{\lfloor \rho\rfloor})}\right]\right)\\
\leq  e&K\|f\|_{\infty} \frac{i e^{\rho    u_n}}{n\rho}\E\left(e^{\rho(t_{i-1}-t_{\lfloor  \rho\rfloor})}\right)\\
=  e&K \|f\|_{\infty} \frac{ie^{\rho    u_n}}{n\rho}
e^{\rho \sum_{k=\lceil \rho \rceil}^{i-1} 1/k}
 e^{\sum_{k=\lceil \rho \rceil}^{i-1}-\log(1-\rho/k)-\rho/k}.
\end{align*}
Thus, there exists some finite  constant $C$ such that, for $i>\rho$,
\[
\E\left(f(i/n^{1/(\rho+2)}, nP_i)\ind{t_{\lfloor \rho\rfloor}\leq u_n}\right)
\leq C \frac{i^{\rho+1} e^{\rho    u_n}}{n}=C \frac{i^{\rho+1} (\log n)^{\rho}}{n},
\]
consequently,
\[
\E\left({\cal Q_{n}(f)}\ind{t_{\lfloor \rho\rfloor}\leq  u_n}\right)
\leq C 
\frac{\beta_n^{\rho+2} (\log n)^{\rho}}{n}
\leq C \frac{1}{(\log n)^{2}}.
\]
This relation and the inequality
\[
\E\left(1-e^{-{\cal Q_{n}(f)}}\right)\leq \P(t_{\lfloor \rho\rfloor}> u_n)+\E\left({\cal Q_{n}(f)}\ind{t_{\lfloor \rho\rfloor}\leq  u_n}\right)
\]
give the desired result. 

\medskip

%%%
\medskip

\paragraph{Step 3}
The proof of the theorem can be now completed.  By Equation~\eqref{Pn}, for $i\geq 1$,
$P_i=W_i^\rho Z_i/i^{\rho+1}$,   by using Step~2 and the same  techniques as  in Step~1
together with the fact that, for $\eta>0$, the probability of the event
\[
\left\{\sup\left( \left|{{W}_{i}^\rho}/{{W}_{\beta_n}^\rho}-1\right|:i\geq \beta_n\right)\geq \eta\right\}
\]
converges to $0$ as $n$ gets large, 
it is  not difficult to  show that  the sequences of point  processes 
\[
\left\{\left(\frac{i}{n^{1/(\rho+2)}},\frac{n}{i^{\rho+1}}W_i^\rho Z_i\right), i\geq 1\right\} \text{ and }
\left\{\left(\frac{i}{n^{1/(\rho+2)}},\frac{n}{i^{\rho+1}}W_{\beta_n}^\rho Z_i\right), i\geq \beta_n\right\}
\]
have  the  same  limit  in  distribution.  Because $W_{\beta_n}$ is independent of
$(Z_i,i\geq \beta_n)$,   the last  point  process  has  the  same distribution as ${\cal
  P}_n^2$ (up to the first $\beta_n$ terms which are negligible similarly as in Step~2). By Step~1, the 
convergence in distribution is therefore proved.
\end{proof}
The following proposition strengthens the statement of Proposition~\ref{cvPoispp}, it will
be used to prove the main asymptotic result on the indices of empty bins. 
\begin{prop}\label{diff}
If $f:\R_+^2\to\R_+$ is a continuous function such that
\begin{enumerate}
\item there exists $K$ such that $f(x,y)=0$ for any $x\leq K$ and $y\in\R_+$,
\item for all $x\in\R_+$, the function $y\to f(x,y)$ is differentiable and
\[
y\to y \left\|\frac{\partial f}{\partial y}\right\|_{y}\stackrel{\text{def.}}{=}
y \sup_{x\in\R_+}\left|\frac{\partial f}{\partial y}\right|(x,y)
\]
is integrable on $\R_+$,
\end{enumerate}
then  Convergence~\eqref{LapTrans} also holds for $f$.
\end{prop}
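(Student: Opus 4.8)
The plan is to run the argument of the proof of Theorem~\ref{theoPn}, checking that the only place where the compact support of the test function in its second variable was used, namely the control of the tails of the $y$-integrals, can instead be carried by assumption~(2). Throughout we may assume $f(x,y)\to 0$ as $y\to+\infty$ (otherwise both sides of~\eqref{LapTrans} equal $0$, since then $\mathcal{P}_n(f)\to+\infty$ in probability). By~(1) the support of $f$ is contained in $[0,K]\times\R_+$, and $f(x,y)\leq\int_y^{+\infty}\|\partial f/\partial y\|_s\,ds$; hence, with $I_1=\int_0^{+\infty}s\|\partial f/\partial y\|_s\,ds<+\infty$,
\[
\int_{\R_+^2}\left(1-e^{-f(x,y)}\right)x^{\rho+1}\,dx\,dy\ \leq\ \int_0^{K}x^{\rho+1}\left(\int_0^{+\infty}f(x,y)\,dy\right)dx\ \leq\ \frac{K^{\rho+2}}{\rho+2}\,I_1\ <\ +\infty ,
\]
so the right-hand side of~\eqref{LapTrans} is well defined for this $f$.

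The main step is to extend Proposition~\ref{cvPoispp}, together with the quantitative estimates of its proof, to test functions of the above type. Going through that proof with $g=1-e^{-f}$, the identity $\E[g(x,nX_i/i^{\delta})]=-\int_0^{+\infty}(\partial g/\partial y)(x,y)\,\P(X_i\leq yi^{\delta}/n)\,dy$ now involves an integral over the whole half-line, which one splits at $y=n\eta_0/i^{\delta}$ for a parameter $0<\eta_0\leq\eta$ to be chosen. On $\{y\leq n\eta_0/i^{\delta}\}$ Condition~C applies: the linear term gives, after summing over $i\leq Kn^{1/(\delta+1)}$ and passing to the Riemann-sum limit, the integral $\alpha\int_{\R_+^2}(1-e^{-f})x^{\delta}\,dx\,dy$, while the quadratic remainder is, using $|\partial g/\partial y|\leq\|\partial f/\partial y\|$ and $\int_0^{+\infty} y\|\partial f/\partial y\|_y\,dy=I_1$, of order $\alpha\,\eta_0\,I_1$, hence negligible for $\eta_0$ small. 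On $\{y\geq n\eta_0/i^{\delta}\}$ one bounds $\P(X_i\leq yi^{\delta}/n)\leq 1$, and since there $y\geq\eta_0 n^{1/(\delta+1)}/K^{\delta}\to+\infty$, the total contribution is of order $\int_{\eta_0 n^{1/(\delta+1)}/K^{\delta}}^{+\infty}s\|\partial f/\partial y\|_s\,ds$, which tends to $0$ for each fixed $\eta_0$. Choosing $\eta_0$ small and then $n$ large establishes~\eqref{Laplace} for $f$, the conclusion being, as in Proposition~\ref{cvPoispp}, expressed through the Condition~C coefficient $\alpha$.

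Granting this, the result follows as in the proof of Theorem~\ref{theoPn}. Conditionally on $\widetilde{W}_\infty$, the variables $\widetilde{W}_\infty^{\rho}\widetilde{Z}_i$ satisfy Condition~C with $\alpha=\widetilde{W}_\infty^{-\rho}/\rho$, so the extended Proposition~\ref{cvPoispp} gives $\E(e^{-\mathcal{P}_n^1(f)}\mid\widetilde{W}_\infty)\to\exp\bigl(-(\widetilde{W}_\infty^{-\rho}/\rho)\int(1-e^{-f})x^{\rho+1}\,dx\,dy\bigr)$; as these quantities are bounded by $1$, Lebesgue's theorem turns this into the convergence of $\E(e^{-\mathcal{P}_n^1(f)})$ to the right-hand side of~\eqref{LapTrans}. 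This is transferred from $\mathcal{P}_n^1$ to $\mathcal{P}_n$ through Steps~1--3 of the proof of Theorem~\ref{theoPn}: the perturbation arguments of Steps~1 and~3 (comparing $\widetilde{W}_\infty^{\rho}$ with $\widetilde{W}_{\beta_n}^{\rho}$, and $W_{\beta_n}^{\rho}$ with the $W_i^{\rho}$, $i\geq\beta_n$) are controlled by assumption~(2) together with the almost sure convergences $\widetilde{W}_{\beta_n}\to\widetilde{W}_\infty$ and $\sup_{i\geq\beta_n}|W_i^{\rho}/W_{\beta_n}^{\rho}-1|\to0$ in probability, exactly as in the second paragraph, while the negligibility (Step~2) of the indices $i\leq\beta_n$ uses the bound $f(x,y)\leq\min(\|f\|_\infty,I_1/y)$, the logarithmic factor this costs being absorbed by the $1/\log n$ present in $\beta_n=\lfloor n^{1/(\rho+2)}/\log n\rfloor$.

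The hard part is this $y$-tail control, together with the need, stressed already before the statement of Theorem~\ref{theoPn}, to perform it conditionally on $W_\infty$. The natural Condition~C coefficient is here the random variable $W_\infty^{-\rho}/\rho$, which for $\rho\geq1$ is not integrable (its tail is $\P(W_\infty^{-\rho}>t)=1-\exp(-t^{-1/\rho})\sim t^{-1/\rho}$), so the tail estimates cannot be run unconditionally; one must first freeze $W_\infty$, for which the coefficient is finite, the passage back to unconditional expectations being then a routine application of dominated convergence.
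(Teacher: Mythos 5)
Your architecture is genuinely different from the paper's: you propose to re-prove Theorem~\ref{theoPn} for the enlarged class of test functions, first extending Proposition~\ref{cvPoispp} (splitting the $y$-integral at $y=n\eta_0/i^{\delta}$) and then re-running Steps~1--3, whereas the paper keeps Theorem~\ref{theoPn} as a black box: it truncates $f$ in the second variable, sandwiches $f_0=f\ind{y\leq M}$ between functions of $C_c^+(\R_+^2)$ to which \eqref{LapTrans} already applies, and concentrates all the new work in a single uniform-in-$n$ tail estimate, inequality~\eqref{bb1}, proved on the event $\{t_{\lfloor\rho\rfloor}\leq L\}$ from the explicit form of $P_i$ and inequality~\eqref{ineq1}. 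Within your route, the extension of Proposition~\ref{cvPoispp} is essentially sound, and your accounting for Step~2 is correct (the crude envelope $f(x,y)\leq\min(\|f\|_\infty,I_1/y)$ indeed costs only a factor $\log n$, absorbed by the $(\log n)^{-2}$ margin coming from $\beta_n$).

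The genuine gap is the claim that the perturbation arguments of Steps~1 and~3 go through ``exactly as in the second paragraph.'' That argument rests entirely on compact support in $y$: with high probability at most $m$ points of ${\cal P}_n^1$ lie in $[0,2K]^2$, all other points contribute $0$ to both sums, and uniform continuity of $f$ on a compact set finishes. For your class of $f$ every point $(i/n^{1/(\rho+2)},nP_i^1)$ with $i\lesssim Kn^{1/(\rho+2)}$ contributes, the second coordinates are unbounded, and you must bound $\sum_i\bigl|f(x_i,c_ny_i)-f(x_i,y_i)\bigr|$ with $c_n\to1$ over order $n^{1/(\rho+2)}$ points. Assumption~(2) only gives the pointwise bound $|f(x,cy)-f(x,y)|\leq\int_{y\wedge cy}^{y\vee cy}\|\partial f/\partial y\|_s\,ds$; to sum it you need a quantitative control of the law of $nP_i^1$ on short intervals, e.g.\ the bound $\P\bigl(u\leq nP_i^1\leq v\mid\widetilde W_\infty\bigr)\leq (v-u)\,i^{\rho+1}/(\rho n\widetilde W_\infty^{\rho})$ (valid for $i>\rho$ since $Z_i$ has density at most $1/\rho$), which after summation over $i\leq Kn^{1/(\rho+2)}$ and integration against $y\|\partial f/\partial y\|_y$ yields a term of order $\eta\,\widetilde W_\infty^{-\rho}$, to be handled by dominated convergence --- and the analogue for Step~3, where the multiplying factor $W_i^\rho/W_{\beta_n}^\rho$ is not independent of the $Z_j$'s, is messier still. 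This is precisely the kind of uniform estimate the paper isolates once and for all in \eqref{bb1}; nothing in the compactly supported argument supplies it, and your proposal neither states nor proves it. The plan can be completed along these lines, but as written the transfer from ${\cal P}_n^1$ to ${\cal P}_n$ is not justified, and it is exactly the step the paper's truncation device is designed to avoid.
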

\begin{proof}
For  $M$, $L\geq 0$ and $i$, $n\in\N$, one has 
\begin{multline*}
\E\left(f\left(\frac{i}{n^{\rho+2}},nP_i\right)\ind{nP_i\geq M, t_{\lfloor \rho\rfloor}\leq L}\right)
\\=- \int_0^{+\infty} \frac{\partial f}{\partial y}\left(\frac{i}{n^{\rho+2}},y\right) \P(M\leq nP_i\leq y, t_{\lfloor \rho\rfloor}\leq L)\,dy. 
\end{multline*}
By using similar arguments as in the end of the proof of the above theorem, one gets
\begin{align*}
\E\left(f\left(\frac{i}{n^{\rho+2}},\right.\right.&\left.\left.\rule{-1mm}{5mm}nP_i\right)\ind{nP_i\geq M, t_{\lfloor \rho\rfloor}\leq L}\right)\\
&\leq  e \int_M^{+\infty} \left\|\frac{\partial f}{\partial y}\right\|_y
\E\left(1-\exp\left[-\frac{i}{n\rho}ye^{\rho L}e^{\rho(t_{i-1}-t_{\lfloor  \rho\rfloor})}\right]\right) \,dy \\
&\leq \frac{ie^{\rho L}}{n\rho} e\E\left(e^{\rho(t_{i-1}-t_{\lfloor \rho\rfloor})}\right)
\int_M^{+\infty} y\left\|\frac{\partial f}{\partial y}\right\|_y dy\\
&\leq C\frac{i^{\rho+1}e^{\rho L}}{n} 
\int_M^{+\infty} y\left\|\frac{\partial f}{\partial y}\right\|_y dy,
\end{align*}
for some fixed constant $C$. Define $k_n=\lfloor K n^{1/(\rho+2)}\rfloor$, by summing up
these terms,  this gives the relation
\begin{multline}\label{bb1}
\E\left(\sum_{i\geq 1} f\left(\frac{i}{n^{\rho+2}},nP_i\right)\ind{M\leq nP_i, t_{\lfloor \rho\rfloor}\leq L}\right)\\\leq 
C\frac{k_n^{\rho+2}e^{\rho L}}{n} \int_M^{+\infty} y\left\|\frac{\partial f}{\partial y}\right\|_y dy\leq
CK^{\rho+2}e^{\rho L}\int_M^{+\infty} y\left\|\frac{\partial f}{\partial y}\right\|_y dy.
\end{multline}
Define $f_0(x,y)=f(x,y)\ind{y\leq M}$, by using a convolution kernel on the variable $y$, there exist
sequences $(g_p^+)$ and $(g_p^-)$ in $C_c^+(\R_+)$  converging pointwisely to $f_0$ for
all $y\not=M$ such that $g_p^-\leq f_0\leq  g_p^+$. See Rudin~\cite{Rudin:01} for
example. Proposition~\ref{cvPoispp} gives that  
\begin{multline*}
\E(\exp(-{\cal   P}(g_p^+)))\leq 
\liminf_{n\to+\infty} \E(\exp(-{\cal   P}_n(f_0)))   \\ \leq \limsup_{n\to+\infty} \E(\exp(-{\cal   P_n}(f_0)))
\leq \E(\exp(-{\cal   P}(g_p^-))),
\end{multline*}
and Expression~\eqref{Laplace} shows that, as $p$ goes to
infinity,  the left and right hand side terms of this relation converge to the Laplace
transform of ${\cal P}$  at $f_0$. Therefore, Convergence~\eqref{LapTrans} holds at
$f_0$. Since
\begin{multline*}
0\leq \E\left(e^{-{\cal P}_{n}(f)}\right)-\E\left(e^{-{\cal    P}_{n}(f_0)}\right)
\\\leq P(t_{\lfloor \rho\rfloor}\geq L) +\E\left[\left(1-e^{-({\cal P}_{n}(f)-{\cal P}_{n}(f_0))}\right)\ind{t_{\lfloor \rho\rfloor}\leq L}\right]
\\\leq P(t_{\lfloor \rho\rfloor}\geq L) +\E\left[\left({\cal P}_{n}(f)-{\cal P}_{n}(f_0)\right)\ind{t_{\lfloor \rho\rfloor}\leq L}\right],
\end{multline*}
and the last term being the left hand side of Relation~\eqref{bb1}, one can choose  $L$ and
$M$ sufficiently large so that this difference is arbitrarily small. The proposition is
proved. 
\end{proof}

\section{Asymptotic Behavior of the Indices of the First Empty Bins}\label{seclimit}
It is  assumed that  a large  number $n$ of  balls are  thrown in the  bins according  to the
probability  distribution $(P_i)$  defined  by Equation~\eqref{Pn}.  The  purpose of  this
section is to establish limit theorems to describe the limiting distribution of the set of
indices of bins having a fixed number of balls.
\begin{thm}\label{Res}
The point process of rescaled indices of empty bins associated to the probability vector $(P_i)$
when $n$ balls have been used
\[
{\cal N}_{n}=\left\{\frac{i}{n^{1/(\rho+2)}}: i\geq 1, \text{ the } i\text{th bin  is empty}\right\} 
\]
converges in distribution as $n$ goes to infinity to a point process ${\cal
  N}_{\infty}$ whose distribution is given by 
\begin{equation}
\E\left(e^{-{\cal N}_{\infty}(g)}\right)=
\E\left[\exp\left(-\frac{W_\infty^{-\rho}}{\rho}\int_{\R_+}\left(1-e^{-g(x)} \right)x^{\rho+1}\,dx\right)\right],
\end{equation}
for $g\in C_c^+(\R_+)$. Equivalently $({\cal N}_{n})$ converges in distribution to the point
process
\[
 \left(W_{\infty}^{\rho/(\rho+2)}\,t_i^{1/(\rho+2)}\right),
\]
where $(t_i)$ is a standard Poisson process with parameter $[\rho(\rho+2)]^{-{1}/{(\rho+2)}}$.
\end{thm}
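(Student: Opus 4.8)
The plan is to reduce the statement about the point process of empty bins to the convergence of the two-dimensional point process $\mathcal{P}_n=\{(i/n^{1/(\rho+2)},nP_i):i\geq 1\}$ established in Theorem~\ref{theoPn}, using the Poisson (or Poissonization) transform to linearize the occupancy indicators. Conditionally on the environment $(P_i)$ and on the number $n$ of balls thrown, the number of balls in bin $i$ is, up to a harmless Poissonization step, a Poisson random variable with parameter $nP_i$, and these are independent across $i$. Under Poissonization the event ``bin $i$ is empty'' has conditional probability $e^{-nP_i}$, so for $g\in C_c^+(\R_+)$,
\[
\E\!\left(e^{-\mathcal{N}_n(g)}\,\middle|\,(P_i)\right)
=\prod_{i\geq 1}\left(e^{-nP_i}e^{-g(i/n^{1/(\rho+2)})}+1-e^{-nP_i}\right)
=\prod_{i\geq 1}\left(1-e^{-nP_i}\bigl(1-e^{-g(i/n^{1/(\rho+2)})}\bigr)\right).
\]
Taking logarithms, this is $\exp\bigl(-\sum_i F(i/n^{1/(\rho+2)},nP_i)+o(1)\bigr)$ where $F(x,y)=-\log\bigl(1-e^{-y}(1-e^{-g(x)})\bigr)$, a bounded continuous functional that vanishes for $x$ outside the compact support of $g$ and decays like $e^{-y}(1-e^{-g(x)})$ as $y\to\infty$ (so $y\,\|\partial F/\partial y\|_y$ is integrable). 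First I would verify that $F$ satisfies the hypotheses of Proposition~\ref{diff}; then applying that proposition gives
\[
\lim_{n\to\infty}\E\!\left(e^{-\mathcal{P}_n(F)}\right)
=\E\!\left[\exp\!\left(-\frac{W_\infty^{-\rho}}{\rho}\int_{\R_+^2}\bigl(1-e^{-F(x,y)}\bigr)x^{\rho+1}\,dx\,dy\right)\right].
\]
Since $1-e^{-F(x,y)}=e^{-y}(1-e^{-g(x)})$ and $\int_0^\infty e^{-y}\,dy=1$, the double integral collapses to $\int_{\R_+}(1-e^{-g(x)})x^{\rho+1}\,dx$, which is exactly the asserted Laplace functional of $\mathcal{N}_\infty$.

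To pass from the Poissonized model to the model with exactly $n$ balls I would use the standard de-Poissonization argument: the functionals under consideration are bounded and smooth, the number of empty bins among the relevant indices is concentrated, and one replaces $n$ by a Poisson($n$) number of balls at a cost that vanishes since $\sqrt{n}\,P_i\to 0$ uniformly over the range $i\asymp n^{1/(\rho+2)}$ that contributes; alternatively one notes directly that conditionally on $(P_i)$ the empty-bin indicators are negatively associated, and a second-moment computation matching the first-moment asymptotics already obtained controls the discrepancy. Either way the limiting Laplace functional is unchanged. Finally, for the last ``equivalently'' assertion I would identify the mixed Poisson process whose Laplace functional is the displayed expression: conditionally on $W_\infty$, $\mathcal{N}_\infty$ is a Poisson process on $\R_+$ with intensity $\tfrac{W_\infty^{-\rho}}{\rho}x^{\rho+1}\,dx$; the change of variable $u=x^{\rho+2}$ turns this into a homogeneous Poisson process with rate $\tfrac{W_\infty^{-\rho}}{\rho(\rho+2)}$, so $\mathcal{N}_\infty$ has the same law as $\{(\,\rho(\rho+2)W_\infty^{\rho}\,)^{-1/(\rho+2)}\,s_i^{1/(\rho+2)}\}$ with $(s_i)$ a unit-rate Poisson process, which is the stated representation with the constant absorbed as $W_\infty^{\rho/(\rho+2)}$ times a Poisson process of parameter $[\rho(\rho+2)]^{-1/(\rho+2)}$.

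The main obstacle I anticipate is not the algebra but making the de-Poissonization rigorous while the environment $(P_i)$ is itself random and, crucially, correlated through $W_\infty$: one cannot simply condition on $W_\infty$ and quote a uniform total-variation bound (this is exactly the difficulty flagged in the discussion before Theorem~\ref{theoPn}). I would handle this by conditioning on $W_\infty$ only at the level of the bounded Laplace functionals — where dominated convergence applies cleanly — rather than at the level of total variation, and by checking that the error terms in the de-Poissonization are bounded by quantities (such as $n^{-1/(\rho+2)}$ times a deterministic constant, or a tail probability $\P(t_{\lfloor\rho\rfloor}>L)$) that are already shown to be negligible in the proof of Proposition~\ref{diff}. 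A secondary technical point is the treatment of the first $O(1)$ bins, where $nP_i\to\infty$ and which therefore contribute no empty bins in the limit; this is dispatched exactly as in Step~2 of the proof of Theorem~\ref{theoPn}.
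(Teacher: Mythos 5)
Your proposal follows essentially the same route as the paper: Poissonization, rewriting the Laplace functional of the empty-bin process as $\E(e^{-{\cal P}_n(f)})$ with $f(x,y)=-\log\left(1-e^{-y}\left(1-e^{-g(x)}\right)\right)$, applying Proposition~\ref{diff} and collapsing the $y$-integral, then de-Poissonizing --- the paper's de-Poissonization being precisely the monotonicity-of-empty-bins-in-the-number-of-balls sandwich combined with the Poisson central limit theorem that your ``standard argument'' alludes to (this coupling, rather than negative association or a second-moment bound, is what makes the comparison clean). One minor slip in your last paragraph: the change of variable gives points $\left(\rho(\rho+2)W_\infty^{\rho}\right)^{1/(\rho+2)}s_i^{1/(\rho+2)}$ with $(s_i)$ of unit rate (not the $-1/(\rho+2)$ power), equivalently $W_\infty^{\rho/(\rho+2)}t_i^{1/(\rho+2)}$ with $(t_i)$ a Poisson process of rate $[\rho(\rho+2)]^{-1}$, but this only affects the cosmetic restatement, not the convergence proof.
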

It can also be shown that the same  result holds when the indices of bins containing $k$
balls are considered. If ${\cal N}_{k,n}$ is the corresponding point process, the limiting
point process does not in fact depend on $k$ and, moreover, the sequence $({\cal N}_{k,n},
k\geq 0)$ converges in distribution to $({\cal N}_{k,\infty}, k\geq 0)$ and, conditionally
on $W_\infty$, the random variables  ${\cal N}_{k,\infty}$, $k\geq 0$ are independent with
the same distribution.
\begin{proof}
Poissonization. A closely related model is first analyzed when $U_n$ balls are used, $U_n$
being an  independent Poisson random  variable with mean  $n$, ${\cal N}_n^0$  denotes the
corresponding point  process. For this model,  conditionally on the  sequence $(P_i)$, the
number  of balls  in the  bins  are independent  Poisson random  variables with  respective
parameters $(nP_i)$.  In a  first step,  the convergence in  distribution of  the sequence
$({\cal N}_n^0)$ of point processes is investigated.
Let $g\in C_c^+(\R_+)$,
\[
\E\left(e^{-{\cal N}_n^0(g)}\right)=\E\left(\exp\left[\sum_{i=1}^{+\infty}\log\left[1-e^{-nP_i}\left(1-e^{-g(i/n^{1/(\rho+2)})}\right)\right]\right]\right),
\]
if one defines $f(x,y)=-\log\left[1-e^{-y}\left(1-e^{-g(x)}\right)\right],$
then
\[
\E\left(\exp\left[-{\cal N}_n^0(g)\right]\right)=\E\left(\exp\left[-{\cal P}_{n}(f)\right]\right),
\]
where ${\cal P}_n$ is the point process defined in Theorem~\ref{theoPn}. By using 
Proposition~\ref{diff}, one gets the relation
\begin{align*}
\lim_{n\to+\infty}\E\left(e^{-{\cal N}_n^0(g)}\right)&=
\E\left[\exp\left(-\frac{W_\infty^{-\rho}}{\rho}\int_{\R_+^2}\left(1-e^{-f(x,y)}\right)x^{\rho+1}\,dx\,dy\right)\right]\\
&=
\E\left[\exp\left(-\frac{W_\infty^{-\rho}}{\rho}
\int_{\R_+}\left(1-e^{-g(x)} \right)x^{\rho+1}\,dx\right)\right].
\end{align*}
For $0<\alpha<1$, it is not difficult to check that the same result holds for the case
when $U_{n+n^\alpha}$ balls are used, ${\cal N}_n^1$ denotes the associated point
process. For $x>0$, the monotonicity property  ${\cal N}_a([0,x])\leq {\cal 
  N}_b([0,x])$ for $b\leq a$ gives the relation
\[
\P({\cal N}_n([0,x])\leq k)\leq \P\left({\cal N}_{n}^1([0,x])\leq k\right)+\P(U_{n+n^\alpha}\leq n).
\]
The central limit theorem for Poisson processes shows that for $\alpha\in(1/2,1)$, the
quantity $\P(U_{n+n^\alpha}\leq n)$ converges to $0$ as $n$ gets large, therefore if
$k\geq 0$, 
\[
\limsup_{n\to+\infty}\P({\cal N}_n([0,x])\leq k)\leq \lim_{n\to+\infty}\P\left({\cal N}_{n}^1([0,x])\leq k\right).
\]
By using a similar argument with the $\liminf$, one gets that the sequences $({\cal N}_n)$
and $({\cal N}_n^0)$ converge in distribution and have the same limit. The proposition is
proved. 
\end{proof}

\begin{corollary}
If $\nu_n$ is the index of the first empty bin when $n$ balls are thrown, then
\[
\lim_{n\to+\infty} \P\left(\frac{\nu_n}{n^{1/(\rho+2)}}\geq x\right)=
\E\left(\exp\left(-\frac{x^{\rho+2}W_{\infty}^{-\rho}}{\rho(\rho+2)}\right)\right), \quad
x\geq 0.
\]
\end{corollary}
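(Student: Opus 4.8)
The plan is to read off the result directly from Theorem~\ref{Res} by specializing to a suitable test function. The key observation is that the event $\{\nu_n \geq x n^{1/(\rho+2)}\}$ says precisely that there is no empty bin with index $i < x n^{1/(\rho+2)}$, i.e.\ that the point process ${\cal N}_n$ puts no mass on the interval $[0,x)$. So I would write
\[
\P\left(\frac{\nu_n}{n^{1/(\rho+2)}}\geq x\right)=\P\big({\cal N}_n([0,x))=0\big),
\]
and aim to pass to the limit on the right-hand side using the convergence in distribution of ${\cal N}_n$ to ${\cal N}_\infty$ established in Theorem~\ref{Res}.

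The mild technical point is that the indicator $\ind{{\cal N}([0,x))=0}$ is not of the form $e^{-{\cal N}(g)}$ for $g\in C_c^+(\R_+)$, and $[0,x)$ is not a continuity set a priori, so I cannot simply plug in. The standard remedy is to approximate: for $\lambda>0$ and a small $\eta>0$, choose continuous functions $g_\eta^-\le \lambda\ind{[0,x)}\le g_\eta^+$ in $C_c^+(\R_+)$ that agree with $\lambda\ind{[0,x)}$ outside an $\eta$-neighbourhood of the endpoint $x$ (and of $0$, which is harmless), sandwich $\E(e^{-{\cal N}_n(g_\eta^+)})\le \E(e^{-{\cal N}_n(\lambda\ind{[0,x)})})\le \E(e^{-{\cal N}_n(g_\eta^-)})$ in the obvious direction, let $n\to\infty$ using Theorem~\ref{Res}, then $\eta\to 0$, and finally $\lambda\to\infty$ so that $e^{-{\cal N}_\infty(\lambda\ind{[0,x)})}\to\ind{{\cal N}_\infty([0,x))=0}$ by monotone convergence. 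This yields
\[
\lim_{n\to+\infty}\P\big({\cal N}_n([0,x))=0\big)=\P\big({\cal N}_\infty([0,x))=0\big).
\]
(Equivalently, since the limiting intensity is absolutely continuous, $x$ is almost surely not an atom of ${\cal N}_\infty$ and $[0,x)$ is a.s.\ a continuity set, so this step can be phrased as a routine consequence of weak convergence of point processes.)

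It remains to compute $\P({\cal N}_\infty([0,x))=0)$ from the Laplace-transform formula in Theorem~\ref{Res}. Conditionally on $W_\infty$, the process ${\cal N}_\infty$ is Poisson on $\R_+$ with intensity $W_\infty^{-\rho}x^{\rho+1}\,dx/\rho$, so
\[
\P\big({\cal N}_\infty([0,x))=0\,\big|\,W_\infty\big)
=\exp\left(-\frac{W_\infty^{-\rho}}{\rho}\int_0^x u^{\rho+1}\,du\right)
=\exp\left(-\frac{W_\infty^{-\rho}}{\rho}\cdot\frac{x^{\rho+2}}{\rho+2}\right).
\]
Taking expectations over $W_\infty$ gives the announced expression
\[
\E\left(\exp\left(-\frac{x^{\rho+2}W_\infty^{-\rho}}{\rho(\rho+2)}\right)\right),
\]
which one can also obtain by formally substituting $g=\lambda\ind{[0,x)}$ in the displayed identity of Theorem~\ref{Res} and letting $\lambda\to\infty$, since $\int_{\R_+}(1-e^{-\lambda\ind{[0,x)}(u)})u^{\rho+1}\,du\uparrow \int_0^x u^{\rho+1}\,du=x^{\rho+2}/(\rho+2)$. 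There is no real obstacle here; the only thing to be careful about is the approximation argument for the indicator of a half-open interval, which is the routine step sketched above.
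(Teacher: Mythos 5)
Your argument is correct and is exactly the derivation the paper intends (the corollary is stated without proof as an immediate consequence of Theorem~\ref{Res}): identify $\{\nu_n\geq xn^{1/(\rho+2)}\}$ with the void event $\{{\cal N}_n([0,x))=0\}$, pass to the limit using the weak convergence of the point processes (with the standard indicator-approximation, licit since the mixed Poisson limit has a.s.\ no atom at $x$), and compute the conditional void probability $\exp\left(-W_\infty^{-\rho}x^{\rho+2}/(\rho(\rho+2))\right)$ before averaging over $W_\infty$. Nothing is missing.
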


\medskip

\subsection*{Comparison with Deterministic Power Law Decay}
For  $\delta>1$, one  considers the  bins and  balls problem  with the  probability vector
$Q=(Q_i,  i\geq   1)=\left({\alpha}/{i^{\delta}}\right)$.  Note  that   for  the  problems
analyzed in  this paper, only the  asymptotic behavior of the  sequence $(Q_i)$ matters.
The equivalent  of Theorem~\ref{Res}  can be obtained  directly from Theorem~1  of Simatos
\etal~\cite{Simatos08:0}.
\begin{prop}
As $n$ goes to infinity, the point process
\[
\left\{\frac{i(\log n)^{1/\delta-1}}{(\alpha\delta n)^{1/\delta}}-\frac{1+\delta}{\delta}\log\log n:
\text{ the } i\text{th bin  is empty}\right\}
\]
converges in distribution to a Poisson point process with
the intensity measure $(\alpha\delta)^{1/\delta}e^x\,dx$  on $\R$.
\end{prop}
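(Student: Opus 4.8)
The quickest route is to note that the statement concerns a \emph{deterministic} occupancy scheme: for the quantities considered only the tail of the probability vector matters (as noted before the proposition), so one may take $Q_i=\alpha/i^\delta$ exactly, and then Theorem~1 of Simatos~\etal~\cite{Simatos08:0} applies and yields the convergence once the normalisation is rewritten in the present form. I would nevertheless prefer a self-contained argument, obtained by running the Poissonisation scheme of the proof of Theorem~\ref{Res}; it is much shorter here because there is no random environment $W_\infty$ to decouple.

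The plan is to first throw a $\Pois{n}$ number of balls. Then the numbers of balls in the bins are \emph{independent} Poisson variables with means $nQ_i=n\alpha/i^\delta$, and the events $A_i=\{\text{the }i\text{th bin is empty}\}$ are independent with $\P(A_i)=e^{-n\alpha/i^\delta}$. Denoting by $\psi_n(i)$ the affine normalisation appearing in the statement and taking $g\in C_c^+(\R)$, independence gives
\[
\E\left(\exp\Big[-\sum_{i\geq1}g\big(\psi_n(i)\big)\ind{A_i}\Big]\right)
=\prod_{i\geq1}\Big(1-e^{-n\alpha/i^\delta}\big(1-e^{-g(\psi_n(i))}\big)\Big),
\]
so that, taking logarithms and using $|\log(1-y)+y|\leq 3y^2/2$ as in Proposition~\ref{cvPoispp}, everything reduces to the asymptotics of $S_n(g)=\sum_{i\geq1}e^{-n\alpha/i^\delta}\big(1-e^{-g(\psi_n(i))}\big)$.

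The normalisation $\psi_n$ is designed precisely so that
\[
\frac{n\alpha}{i^\delta}=\frac1\delta\log n-\frac{1+\delta}{\delta}\log\log n-\psi_n(i)+o(1)
\]
uniformly for $\psi_n(i)$ in a compact set; this is the identity obtained by solving $u+\tfrac{1+\delta}{\delta}\log u=\tfrac1\delta\log(n\alpha)+O(1)$, the equation produced by the Laplace-type estimate $\sum_{i\leq m}e^{-n\alpha/i^\delta}\sim\tfrac{(n\alpha)^{1/\delta}}{\delta}\,e^{-u_m}u_m^{-1-1/\delta}$ with $u_m=n\alpha/m^\delta$. Consequently $e^{-n\alpha/i^\delta}\to0$ uniformly on the support of $g\circ\psi_n$, the square terms discarded above are negligible, and $S_n(g)$ is a Riemann sum converging to $(\alpha\delta)^{1/\delta}\int_{\R}\big(1-e^{-g(x)}\big)e^{x}\,dx$. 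This identifies the limiting Laplace functional as that of a Poisson point process with intensity $(\alpha\delta)^{1/\delta}e^{x}\,dx$ on $\R$, first for differentiable $g$ and then, by the relative compactness and approximation arguments already used in Section~\ref{secpp}, for all $g\in C_c^+(\R)$.

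The step requiring the most care is the control of the bins with small index: one must check that $\sum_{i:\,\psi_n(i)\leq b}e^{-n\alpha/i^\delta}$ stays bounded and converges to $(\alpha\delta)^{1/\delta}e^{b}$, so that no empty bin with strongly negative rescaled index survives in the limit; this is exactly where the two-term expansion of $u_m$, including the $\log\log n$ correction, is needed, rather than just the leading-order location $(\delta\alpha n/\log n)^{1/\delta}$ of the first empty bin. Finally, Poissonisation is removed exactly as at the end of the proof of Theorem~\ref{Res}: one sandwiches the number of balls between $\Pois{n\pm n^\gamma}$ with $\gamma\in(1/2,1)$, uses the monotonicity of $x\mapsto{\cal N}_a([0,x])$ in the number $a$ of balls, and invokes the central limit theorem for Poisson variables to see that the discrepancy is asymptotically negligible.
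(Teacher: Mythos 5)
Your first route is exactly the paper's proof: the paper disposes of this proposition in one line, by invoking Theorem~1 of Simatos \etal~\cite{Simatos08:0}, so on that score you and the paper coincide. The self-contained Poissonisation sketch you add on top is a legitimate alternative (it is the scheme of the proof of Theorem~\ref{Res} with the random environment stripped out, the independence of the Poisson occupancies making the Laplace-functional computation elementary), and its outline --- independent empty-bin events, the $\log(1-y)$ expansion, a Laplace/Riemann-sum estimate, and de-Poissonisation by monotonicity plus the central limit theorem --- is sound.

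One concrete point needs fixing, and it is the crux of your Riemann-sum step: the identity $n\alpha/i^\delta=\tfrac1\delta\log n-\tfrac{1+\delta}{\delta}\log\log n-\psi_n(i)+o(1)$ does \emph{not} hold for the normalisation as printed in the statement. With $\psi_n(i)=i(\log n)^{1/\delta-1}/(\alpha\delta n)^{1/\delta}-\tfrac{1+\delta}{\delta}\log\log n$, the indices with $\psi_n(i)$ in a compact set are of order $n^{1/\delta}(\log n)^{1-1/\delta}\log\log n$, for which $n\alpha/i^\delta\to0$ and $e^{-n\alpha/i^\delta}\to1$; the expected number of points of that process in any bounded interval then diverges, so your claims that $e^{-n\alpha/i^\delta}\to0$ on the support of $g\circ\psi_n$ and that $S_n(g)$ converges would fail as stated. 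Your identity, and hence your whole computation including the constant $(\alpha\delta)^{1/\delta}$, is correct for the affine map with slope $(\log n)^{1+1/\delta}/(\alpha\delta n)^{1/\delta}$ and centering $\log n+\tfrac{1+\delta}{\delta}\log\log n$, i.e.\ for
\[
\psi_n(i)=\frac{i\,(\log n)^{1/\delta+1}}{(\alpha\delta n)^{1/\delta}}-\log n-\frac{1+\delta}{\delta}\log\log n,
\]
which is what the first-empty-bin location $(\alpha\delta n/\log n)^{1/\delta}$ and the local window width $(\alpha\delta n)^{1/\delta}(\log n)^{-1-1/\delta}$ force. In other words the printed normalisation appears to be a typo (exponent $1/\delta-1$ instead of $1/\delta+1$, and the $-\log n$ term missing), and your argument silently uses the corrected version while asserting it for the printed one; you should either verify the key identity against the statement you are proving or say explicitly that you are establishing the result for the corrected normalisation.
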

The probability vector considered in the above theorem has an asymptotic expression of the
form $(P_i)=(W_\infty^\rho E_i/i^{\rho+1})$. In this case, empty bins show up for indices of the
order of  $n^{1/(\rho+2)}$, i.e., much  earlier than for the deterministic case where the exponent of $n$ is $1/\delta=1/(\rho+1)$
(if one ignores the log). This can be explained simply by the fact that some of the
i.i.d.\  exponential random variables $(E_i)$ can be very small thereby creating an additional
possibility of having empty bins. 

In this picture, the variable $W_\infty$ does not seem to have an influence on the
qualitative behavior of these occupancy schemes other than creating some dependency
structure for the vector $(P_i)$. The next section shows that this variable has nevertheless an
important role if one looks at the averages of the number of empty bins. 

\section{Rare Events}\label{secrare}
By Equation~\eqref{LapTrans} of Theorem~\ref{theoPn}, for $x>0$, the limiting number (in
distribution) of empty bins whose index is less than $xn^{1/(\rho+2)}$ has an average
value given by 
\[
\frac{x^{\rho+2}}{\rho(\rho+2)}\E\left(W_\infty^{-\rho}\right)
=\frac{x^{\rho+2}}{\rho(\rho+2)}\int_0^{+\infty} \frac{1}{u^{\rho}}e^{-u}\,du
\]
by Equation~\eqref{Minfty} and since $W_\infty=\exp(-M_\infty)$.This
quantity is infinite when $\rho\geq 1$. The purpose of this section is to investigate this
phenomenon which has a significant impact on the system at the origin of this model. 
It is assumed throughout this section that $\rho\geq 1$. 

\begin{definition}
If $\phi: \N\to\R_+$ is a non-decreasing function, for $n\geq 1$, 
${\cal N}^{\phi}_{n}$ denotes the point process defined by  
\[
{\cal N}^{\phi}_{n}=\left\{\frac{i}{\phi(n)}: i\geq 1, \text{ the } i\text{th bin is empty }\right\}.
\]
\end{definition}
\medskip

\def\rr{{\lfloor \rho\rfloor}}
For $i> \rr$, the quantity $P_i$ can be written as 
$P_i=\exp(-\rho t_\rr)D_iZ_i/i^{\rho+1}$  with
\[
D_i\stackrel{\text{def.}}{=}\exp\left(-\rho\left[M_i-M_\rr -\log \rr \right]\right).
\]
The sequence $(D_i)$ converges almost surely to a finite limit $D_\rho$ given by
\begin{equation}\label{DI}
D_\rho\stackrel{\text{def.}}{=}\exp\left(-\rho\left[M_\infty-M_\rr -\log \rr\right]\right),
\end{equation}
 and, since
$\exp(\rho E_i/i)$ is integrable for $i>\rho$,   a similar result holds for the expected values
\[
\lim_{i\to+\infty}\E(1/D_i)=\E(1/D_\rho)<+\infty. 
\]
With this definition, the asymptotic representation of $(P_i)$ can be given as
$P_i=\exp(-\rho t_\rr)D_\rho E/i^{\rho+1}$ where $E$ is an 
independent exponential random variable with  parameter $1$. In a similar way as before,
this representation can be shown to be valid for the results obtained in this section. 

For $0\leq p\leq 1$ and $n\geq 1$, the elementary inequality
\[
\left|e^{-np}-(1-p)^n\right| \leq \frac{p^2}{2}ne^{-np}\leq\frac{2e^2}{n}
\]
gives directly the following lemma which will be used repeatedly in this section. 
\begin{lemma}
For a non-decreasing function $\phi$, $x\geq 0$, and $n\geq 1$, then
\[
\left|\E\left({\cal N}_n^\phi([0,x])\right)-\sum_{i=1}^{\lfloor x\phi(n)\rfloor}
\E\left(e^{-nP_i}\right)\right|\leq 2e^2\frac{\lfloor  x\phi(n)\rfloor}{n}.
\]
\end{lemma}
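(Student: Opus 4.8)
The plan is to estimate directly the probability that the $i$th bin is empty after $n$ balls have been thrown, namely $(1-P_i)^n$, and then to replace it by $e^{-nP_i}$ via a uniform error bound. Since the $i$th bin is empty precisely when none of the $n$ balls falls into it, and conditionally on the vector $(P_j)$ the balls are thrown independently, the probability that bin $i$ is empty equals $(1-P_i)^n$; hence
\[
\E\left({\cal N}_n^\phi([0,x])\right)=\sum_{i=1}^{\lfloor x\phi(n)\rfloor}\P(i\text{th bin empty})=\sum_{i=1}^{\lfloor x\phi(n)\rfloor}\E\left((1-P_i)^n\right).
\]
First I would record the elementary inequality already displayed just above the lemma: for $0\le p\le 1$ and $n\ge 1$,
\[
\left|e^{-np}-(1-p)^n\right|\le \frac{p^2}{2}\,n\,e^{-np}\le \frac{2e^2}{n},
\]
where the second bound uses that $p\mapsto n p^2 e^{-np}$ attains its maximum at $p=2/n$, with value $4e^{-2}/n\le 2e^2/n$ (any crude constant of this size works; the point is only that it is $O(1/n)$ and independent of $p$).

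Then I would apply this with $p=P_i$, which lies in $[0,1]$ since $(P_i)$ is a probability vector, take expectations, and sum over $1\le i\le \lfloor x\phi(n)\rfloor$. By the triangle inequality,
\[
\left|\E\left({\cal N}_n^\phi([0,x])\right)-\sum_{i=1}^{\lfloor x\phi(n)\rfloor}\E\left(e^{-nP_i}\right)\right|\le \sum_{i=1}^{\lfloor x\phi(n)\rfloor}\E\left|e^{-nP_i}-(1-P_i)^n\right|\le \lfloor x\phi(n)\rfloor\cdot\frac{2e^2}{n},
\]
which is exactly the claimed bound.

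There is essentially no obstacle here: the only mild point worth being careful about is that the uniform-in-$p$ bound $2e^2/n$ must be applied pointwise \emph{inside} the expectation (i.e. for each realization of $P_i$) before summing, so that the randomness of $(P_i)$ plays no role in the error term; and one should note the bound is vacuous/trivially true when $x\phi(n)<1$ since then the index set is empty and $\lfloor x\phi(n)\rfloor=0$. The hardest part, such as it is, is merely checking the constant in the elementary inequality, and since the paper has already stated it in the line preceding the lemma, the proof reduces to the two displays above.
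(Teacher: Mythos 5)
Your proof is correct and is essentially the paper's own argument: the paper derives the lemma directly from the displayed elementary inequality $\left|e^{-np}-(1-p)^n\right|\leq \tfrac{p^2}{2}ne^{-np}\leq 2e^2/n$, applied pointwise with $p=P_i$ to the conditional emptiness probabilities $(1-P_i)^n$ and summed over $i\leq \lfloor x\phi(n)\rfloor$. Your additional remarks (uniformity in $p$ before taking expectations, the trivial case $\lfloor x\phi(n)\rfloor=0$, and the crudeness of the constant) are accurate but add nothing beyond the paper's one-line deduction.
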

When $\phi(n)\ll n$, this lemma implies that to study the asymptotic behavior of
$(\E\left({\cal N}_n^\phi([0,x])\right))$, it is enough to analyze the convergence of the
corresponding sum of the $\E\left(e^{-nP_i}\right)$. 
\noindent
For the moment, $k\in\N$ is fixed, if $n\geq 1$, $i>\rho$, then
\begin{align*}
\E\left(e^{-nP_i}\right)&=\E\left[\exp\left(-nD_\rho e^{-\rho t_{\rr}}E/i^{\rho+1}\right)\right]\\
&=\E\left(\frac{i^{\rho+1}/n}{i^{\rho+1}/n+e^{-t_{\rr}} D_\rho }\right),
\end{align*}
by summing up these terms, if  $\eps_{k,n}\stackrel{\text{def.}}{=}k/n^{1/(\rho+1)}$, one
gets that 
\[
\sum_{i=\rr+1}^k \E\left(
e^{-nP_i}\right)
=n^{1/(\rho+1)} \int_{0}^{\eps_{k,n}}\E\left(\frac{v^{\rho+1}}{v^{\rho+1}+e^{-t_\rr} D_\rho }\right)\,dv+O\left(\eps_{k,n}\right),
\]
which gives the relation
\[
\sum_{i=1}^k \E\left(e^{-nP_i}\right)=n^{1/(\rho+1)}\eps_{k,n}^{\rho+2}
\int_{0}^{1}\E\left(\frac{v^{\rho+1}}{\eps_{k,n}^{\rho+1}v^{\rho+1}+e^{-t_\rr} D_\rho }\right)\,dv +O\left(\eps_{k,n}\right),
\]
with a change of variable. By using Equation~\eqref{eqtn} and again a change of variable,
one obtains the relation
\begin{multline}\label{eg1}
\sum_{i=1}^{k} 
\E\left(e^{-nP_i}\right)
=\frac{\rr}{\rho}n^{1/(\rho+1)}\eps_{k,n}^{{(2\rho+1)}/{\rho}}\\ \times \int_0^{1/\eps^{\rho+1}}\!\!\!\!\!\!
u^{1/\rho-1}(1-\eps_{k,n}^{\frac{\rho+1}{\rho}}u^{1/\rho})^{\rr-1} du\int_{0}^{1}\E\left(\frac{v^{\rho+1}}{v^{\rho+1}+u
    D_\rho }\right)\,dv +O\left(\eps_{k,n}\right).
\end{multline}
This quantity is now analyzed according to the values of $\rho$. 

\bigskip
\noindent
{\bf Case} $\rho>1$.  \\
If $k_n{=}\lfloor xn^{\alpha}\rfloor$ with $1/(2\rho+1)\leq \alpha<1/(\rho+1)$,  then
$\eps_{k_n,n}\sim xn^{(\alpha(\rho+1)-1)/(\rho+1)}$ and, by Relation~\eqref{eg1},
\begin{multline*}
\lim_{n\to+\infty} \frac{1}{n^{((2\rho+1)\alpha-1)/\rho}}\sum_{i=1}^{k_n} \E\left(e^{-nP_i}\right)
\\=x^{(2\rho+1)/\rho} \;\frac{\rr}{\rho} \int_0^{+\infty}
u^{1/\rho-1}du\int_{0}^{1}\E\left(\frac{v^{\rho+1}}{v^{\rho+1}+u
    D_\rho }\right)\,dv.
\end{multline*}

\bigskip
\noindent
{\bf Case }$\mathbf{\rho=1}$.\\
Equation~\eqref{eg1} is for this case 
\[
\sum_{i=1}^{k} 
\E\left(e^{-nP_i}\right)
=\sqrt{n}\eps_{k,n}^{3}\int_0^{1/\eps_{k,n}^2}\,du\int_{0}^{1}\E\left(\frac{v^{2}}{v^{2}+u    D_1 }\right)\,dv +O\left(\eps_{k,n}\right).
\]
If $k_n{=}\lfloor xn^{1/3}/\log^\beta n\rfloor$ with $\beta\in\R$, then
$\eps_{k_n,n}\sim x/(n^{1/6}(\log n)^\beta)$ and for $\beta\leq 1/3$,
\[
\lim_{n\to+\infty} \frac{1}{(\log n)^{(1-3\beta)}}\sum_{i=1}^{k_n} \E\left(e^{-nP_i}\right)
=\frac{1}{9}x^{3}\E\left(\frac{1}{D_1}\right).
\]
The following proposition has therefore been proved. 
\begin{prop}[Average of the Number of Empty Bins]
For  $\alpha$, $\beta>0$, for $n\in\N$, denote by $p_{\alpha,\beta}(n)= n^{\alpha}(\log n)^{-\beta} $,
and by convention $p_\alpha=p_{\alpha,0}$.
\begin{enumerate}
\begin{comment}
\item if $\rho<1$ and $1/(\rho+2)\leq \alpha\leq 2/(\rho+2)$,
\[
\lim_{n\to+\infty} \frac{1}{n^{\alpha(\rho+2)-1}}\,\E\left( {\cal N}^{p_\alpha}_{n}([0,x])\right)=
\frac{x^{\rho+2}}{\rho(\rho+2)} \E\left(\frac{1}{D_\rho}\right).
\]
\end{comment}
\item If $\rho>1$ and $ 1/(2\rho+1)\leq \alpha<1/(\rho+1)$,
\begin{multline*}
\lim_{n\to+\infty} \frac{1}{n^{{(\alpha(2\rho+1)-1)}/{\rho}}}\E\left( {\cal
  N}^{p_\alpha}_{n}([0,x])\right)
\\= x^{{(2\rho+1)}/{\rho}} \;\frac{\rr}{\rho} \int_0^{+\infty}
u^{1/\rho-1}du\int_{0}^{1}\E\left(\frac{v^{\rho+1}}{v^{\rho+1}+u D_\rho }\right)\,dv.
\end{multline*}
\item If $\rho=1$ and $\beta\leq 1/3$,
\[
\lim_{n\to+\infty} \frac{1}{(\log n)^{(1-3\beta)}}\E\left( {\cal N}^{p_{1/3,\beta}}([0,x])\right)
=\frac{1}{9}x^{3}\E\left(\frac{1}{D_1 }\right)
\]
\end{enumerate}
\end{prop}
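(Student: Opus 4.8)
The plan is to reduce everything, via the Lemma that precedes the statement, to the asymptotics of the sums $\sum_{i=1}^{k_n}\E(e^{-nP_i})$ with $k_n=\lfloor x\,p_{\alpha,\beta}(n)\rfloor$, and to observe that these asymptotics have already been derived in Equation~\eqref{eg1} and the two case analyses ($\rho>1$ and $\rho=1$) immediately preceding the statement. So the proposition is essentially a matter of assembling those computations into a clean statement; the work to do is to justify the two limiting steps that were performed somewhat informally above. First I would invoke the Lemma with $\phi=p_{\alpha,\beta}$: since in both cases $\alpha<1/(\rho+1)\leq 1$ (and for $\rho=1$, $\alpha=1/3$), one has $\phi(n)\ll n$, so the error term $2e^2\lfloor x\phi(n)\rfloor/n\to 0$, and $\E({\cal N}_n^\phi([0,x]))$ and $\sum_{i\le x\phi(n)}\E(e^{-nP_i})$ differ by $o(1)$ — in fact by $O(\phi(n)/n)$, which is even negligible against the normalizing factors $n^{(\alpha(2\rho+1)-1)/\rho}$ and $(\log n)^{1-3\beta}$ appearing in the statement (a quick check: in case (1), $\phi(n)/n = n^{\alpha-1}$, while the normalization is $n^{(\alpha(2\rho+1)-1)/\rho}$, and $\alpha-1 < (\alpha(2\rho+1)-1)/\rho$ reduces to $\alpha<1/(\rho+1)$, which holds; similarly for case (2)). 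Hence it suffices to prove the stated limits with ${\cal N}_n^\phi([0,x])$ replaced by $\sum_{i=1}^{k_n}\E(e^{-nP_i})$.

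Next I would record the exact identity for $\E(e^{-nP_i})$. Using the representation $P_i=\exp(-\rho t_\rr)D_\rho E/i^{\rho+1}$ with $E$ a parameter-$1$ exponential independent of $(t_\rr,D_\rho)$ — which was argued just before the statement to be legitimate for these averages — one integrates out $E$ to get $\E(e^{-nP_i})=\E\big((i^{\rho+1}/n)/(i^{\rho+1}/n+e^{-\rho t_\rr}D_\rho)\big)$, and then, summing over $\rr<i\le k_n$ and recognizing a Riemann sum (with step $n^{-1/(\rho+1)}$ and the obvious $O(\eps_{k_n,n})$ endpoint/monotonicity correction), one arrives at Equation~\eqref{eg1} after the two changes of variable indicated there — the second one using Equation~\eqref{eqtn}, $\P(t_\rr\le u)=(1-e^{-u})^\rr$, to rewrite the expectation over $t_\rr$ as the $u$-integral against $u^{1/\rho-1}(1-\eps^{(\rho+1)/\rho}u^{1/\rho})^{\rr-1}$. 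I would state this as an intermediate lemma so that both cases flow from it.

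It then remains to take the limit in Equation~\eqref{eg1} in each regime. For $\rho>1$ and $1/(2\rho+1)\le\alpha<1/(\rho+1)$, one has $\eps_{k_n,n}\sim x\,n^{(\alpha(\rho+1)-1)/(\rho+1)}\to 0$, the prefactor $\rr\rho^{-1}n^{1/(\rho+1)}\eps_{k_n,n}^{(2\rho+1)/\rho}$ is asymptotic to $x^{(2\rho+1)/\rho}\rr\rho^{-1}n^{(\alpha(2\rho+1)-1)/\rho}$, the upper limit $1/\eps^{\rho+1}\to\infty$, and the factor $(1-\eps^{(\rho+1)/\rho}u^{1/\rho})^{\rr-1}\to 1$ pointwise, so the $u$-integral converges (by dominated convergence, the domination coming from integrability of $u^{1/\rho-1}\int_0^1 v^{\rho+1}/(v^{\rho+1}+uD_\rho)\,dv$ at $u=\infty$ — where the inner integral is $O(1/u)$ — and at $u=0$ — where $u^{1/\rho-1}$ is integrable since $\rho>1$) to $\int_0^\infty u^{1/\rho-1}\,du\int_0^1\E(v^{\rho+1}/(v^{\rho+1}+uD_\rho))\,dv$; dividing by $n^{(\alpha(2\rho+1)-1)/\rho}$ gives case (1). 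For $\rho=1$, Equation~\eqref{eg1} simplifies (the $u^{1/\rho-1}=u^0$ and the $(1-\cdot)^{\rr-1}=(1-\cdot)^0$ both disappear since $\rr=1$) to the displayed form $\sqrt n\,\eps_{k_n,n}^3\int_0^{1/\eps^2}du\int_0^1\E(v^2/(v^2+uD_1))\,dv$; with $k_n=\lfloor xn^{1/3}(\log n)^{-\beta}\rfloor$ one gets $\eps_{k_n,n}\sim x\,n^{-1/6}(\log n)^{-\beta}$, so the prefactor $\sqrt n\,\eps^3\sim x^3(\log n)^{-3\beta}$, and the key point is that $\int_0^{1/\eps^2}\int_0^1\E(v^2/(v^2+uD_1))\,dv\,du$ grows like $\tfrac13\E(1/D_1)\log(1/\eps^2)\sim\tfrac13\E(1/D_1)\cdot\tfrac13\log n$ as $\eps\to 0$ — because, for large $u$, $\int_0^1 v^2/(v^2+uD_1)\,dv = 1/(3u\,D_1)+O(u^{-3/2})$, whose integral over $u\in[1,T]$ is $(3D_1)^{-1}\log T+O(1)$, and $\E$ commutes with this using $\E(1/D_1)<\infty$. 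Multiplying, $x^3(\log n)^{-3\beta}\cdot\tfrac19\E(1/D_1)\log n = \tfrac19 x^3\E(1/D_1)(\log n)^{1-3\beta}$, and dividing by $(\log n)^{1-3\beta}$ yields case (2). The main obstacle is the $\rho=1$ case: one must pin down the logarithmic growth rate of the double integral precisely (constant $\tfrac13$ and the $\log(1/\eps^2)$, i.e. the extra factor $\tfrac13$ from $\eps^2$) and control the $O(1)$ remainders uniformly enough to swap them with the expectation over $D_1$ — the $\rho>1$ case is comparatively soft, being just dominated convergence in a convergent integral, but for $\rho=1$ the integral is genuinely divergent and the whole content of the statement is the rate at which it diverges.
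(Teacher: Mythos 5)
Your proposal is correct and is essentially the paper's own argument: the Lemma reduces $\E({\cal N}^{\phi}_n([0,x]))$ to $\sum_{i\leq x\phi(n)}\E\left(e^{-nP_i}\right)$, and the stated limits are then read off from Equation~\eqref{eg1} through exactly the two case analyses ($\rho>1$ by dominated convergence in a convergent integral, $\rho=1$ by extracting the logarithmic divergence rate $\frac{1}{3}\E(1/D_1)\log(1/\eps_{k_n,n}^2)\sim\frac19\E(1/D_1)\log n$) that the paper carries out just before the statement. Two harmless slips worth fixing: the comparison $\alpha-1<(\alpha(2\rho+1)-1)/\rho$ reduces to $\alpha>(1-\rho)/(1+\rho)$ (trivially true when $\rho>1$), not to $\alpha<1/(\rho+1)$; and the hypothesis $\rho>1$ is what makes the dominating function, of order $u^{1/\rho-2}$, integrable at $u=+\infty$ (using $\E(1/D_\rho)<+\infty$), while integrability of $u^{1/\rho-1}$ at $u=0$ holds for every $\rho>0$.
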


\bigskip
\subsection*{A Double Threshold}
For  the  convergence  in  distribution   of  the  sequence  of  point  processes  $({\cal
  N}^\phi_n)$,  Theorem~\ref{Res} has  shown that  the correct  scaling $\phi$  for the
order   of  magnitude   of   the  indices   of  the   first   empty  bins   is  given   by
$\phi(n)=n^{1/(\rho+2)}$, $n\geq 1$.   For the average number of points  in a finite interval,
the   above   proposition states  that,   for  $\rho>1$,   the   correct   scaling   is
in fact $\phi(n)=n^{1/(2\rho+1)}\ll n^{1/(\rho+2)}$. 

For $\alpha>0$, with the notations of the above proposition, one concludes that under the
condition $\rho>1$ and for $1/(2\rho+1) < \alpha< 1/(\rho+2)$, the following limit results hold
\[
{\cal N}_n^{p_\alpha}\stackrel{\text{dist.}}{\to} 0 \text{ and }
\lim_{n\to+\infty} \E\left({\cal N}_n^{p_\alpha}[0,x]\right)=+\infty, \quad \forall x>0.
\]
This suggests that,  in this case, with a  high probability, all the bins  with index less
than $n^{1/(\rho+2)}$ have a  large number of balls. But also that  there exists some rare
event for  which a  very large number of empty bins  with indices of an
order slightly  greater than $n^{1/(2\rho+1)}$ are created.  The following proposition shows  that the
total size  of the first  $\rr$ bins is  the key variable  to explain this  phenomenon. It
should be of  the order of $\log n$ in  order to have sufficiently many  empty bins in the
appropriate region.
\begin{prop}\label{kr}
For $\rho>1$ and if $p_\alpha(n)= n^{\alpha} $, for $\alpha \in [{1}/{(2\rho+1)},{1}/{(\rho+2)})$  and
\[
\delta_0(\alpha)\stackrel{\text{def.}}{=}\frac{1-\alpha(\rho+2)}{\rho-1}
\quad \text{ and }\quad 
\delta_1(\alpha)\stackrel{\text{def.}}{=}\frac{1-\alpha(\rho+1)}{\rho},
\]
then, for  $a\in\R$ and $x>0$, 
\begin{enumerate}
\item If  $\delta<\delta_0(\alpha)$, then
\[
\lim_{n\to+\infty} \E\left({\cal N}^{p_\alpha}_n([0,x])\ind{t_\rr\leq \delta\log n }\right)=0.
\]
\item If  $\delta\in[\delta_0(\alpha),\delta_1(\alpha)[$, then
\[
\lim_{n\to+\infty} \frac{ \E\left({\cal N}^{p_\alpha}_n([0,x])\ind{t_\rr\leq
\delta\log n  +a}\right)}{n^{(\rho+2)\alpha+\delta(\rho-1)-1}}=\frac{x^{\rho+2}}{(\rho+2)}\frac{\rr}{(\rho-1)}\E\left(\frac{1}{D_\rho}\right)e^{(\rho-1)a}.
\]
\item If $\delta \geq \delta_1(\alpha)$,
\begin{multline*}
\lim_{n\to+\infty} \frac{ \E\left({\cal N}^{p_\alpha}_n([0,x])\ind{t_\rr\leq
\delta\log n  +a}\right)}{n^{((2\rho+1)\alpha-1)/\rho}}\\=
x^{(2\rho+1)/\rho} \;\frac{\rr}{\rho} \int_{e^{-\rho a}\ind{\delta=\delta_1(\alpha)}}^{+\infty}
u^{1/\rho-1}du\int_{0}^{1}\E\left(\frac{v^{\rho+1}}{v^{\rho+1}+u
    D_\rho }\right)\,dv,
\end{multline*}
\end{enumerate}
where  $D_\rho$ is the random variable defined by Equation~\eqref{DI}.
\end{prop}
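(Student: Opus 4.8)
The plan is to reduce the statement to the asymptotics of a single explicit integral, namely the one already obtained in~\eqref{eg1} but now carrying the indicator $\ind{t_\rr\leq\delta\log n+a}$, and then to read off the three regimes by elementary estimates. First, since $p_\alpha(n)=n^\alpha$ with $\alpha<1/(\rho+2)<1$, the Lemma stated above (whose proof is a pointwise bound and so remains valid with any extra indicator inserted) gives $|\E({\cal N}^{p_\alpha}_n([0,x])\ind{t_\rr\leq\delta\log n+a})-S_n|\leq 2e^2\lfloor xn^\alpha\rfloor/n$ with $S_n:=\sum_{i=1}^{\lfloor xn^\alpha\rfloor}\E(e^{-nP_i}\ind{t_\rr\leq\delta\log n+a})$. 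The right-hand side is $O(n^{\alpha-1})$, which tends to $0$ and is of strictly smaller order than each claimed rate, so it is enough to analyse $S_n$.

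As in the computation leading to~\eqref{eg1}, I use the representation $P_i=\exp(-\rho t_\rr)D_\rho E/i^{\rho+1}$ with $E$ an independent exponential variable — its validity for the functionals at hand being checked exactly as for the other results of this section — so that $\E(e^{-nP_i}\mid t_\rr,D_\rho)=(i^{\rho+1}/n)/(i^{\rho+1}/n+\exp(-\rho t_\rr)D_\rho)$. The point that makes the computation go through is that $D_\rho$ depends only on $(E_j)_{j>\rr}$ while $t_\rr$ depends only on $E_1,\dots,E_\rr$, so $t_\rr$ and $D_\rho$ are independent, with the law of $t_\rr$ given by~\eqref{eqtn}. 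Approximating $\sum_i$ by a Riemann integral in $i/n^{1/(\rho+1)}$ and performing the same chain of changes of variable that produced~\eqref{eg1} (in particular $s=e^{-\rho t_\rr}$ and then $s=\eps_{k_n,n}^{\rho+1}u$), the indicator $\ind{t_\rr\leq\delta\log n+a}=\ind{e^{-\rho t_\rr}\geq n^{-\rho\delta}e^{-\rho a}}$ turns into a lower cutoff on the inner $u$-integral: Relation~\eqref{eg1} holds verbatim with $\int_0^{1/\eps^{\rho+1}}$ replaced by $\int_{u_0(n)}^{1/\eps^{\rho+1}}$, where $u_0(n)$ has order $e^{-\rho a}n^{1-\alpha(\rho+1)-\rho\delta}$ up to a positive constant depending on $x$, and the $O(\eps_{k_n,n})$ remainder is left untouched.

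Now write $\eps_n:=\eps_{k_n,n}\sim x\,n^{\alpha-1/(\rho+1)}$, so that the prefactor in front of the $u$-integral is $(\rr/\rho)\,n^{1/(\rho+1)}\eps_n^{(2\rho+1)/\rho}\sim(\rr/\rho)\,x^{(2\rho+1)/\rho}n^{((2\rho+1)\alpha-1)/\rho}$; note also $\eps_n^{(\rho+1)/\rho}u_0(n)^{1/\rho}=n^{-\delta}e^{-a}\to0$ always, whereas $u_0(n)\to+\infty$ iff $\delta<\delta_1(\alpha)$. When $\delta<\delta_1(\alpha)$, after rescaling $u=u_0(n)\tilde u$ the factor $(1-\eps_n^{(\rho+1)/\rho}u^{1/\rho})^{\rr-1}$ tends to $1$ and $\int_0^1\E(v^{\rho+1}/(v^{\rho+1}+uD_\rho))\,dv\sim \E(1/D_\rho)/((\rho+2)u)$ as $u\to\infty$; since $\rho>1$ makes $\tilde u^{1/\rho-2}$ integrable at $+\infty$, one gets $\int_{u_0(n)}^{1/\eps_n^{\rho+1}}(\cdots)\,du\sim\frac{\E(1/D_\rho)}{\rho+2}\frac{\rho}{\rho-1}\,u_0(n)^{1/\rho-1}$, and multiplying by the prefactor and collecting the powers of $n$, $x$ and $e^{-\rho a}$ produces exactly $\frac{x^{\rho+2}}{\rho+2}\frac{\rr}{\rho-1}\E(1/D_\rho)e^{(\rho-1)a}\,n^{(\rho+2)\alpha+\delta(\rho-1)-1}$. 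This is (ii) when $\delta\in[\delta_0(\alpha),\delta_1(\alpha))$; and when $\delta<\delta_0(\alpha)$ the exponent $(\rho+2)\alpha+\delta(\rho-1)-1$ is negative, so $S_n\to0$, which is (i). Finally, when $\delta\geq\delta_1(\alpha)$ the cutoff $u_0(n)$ converges — to $e^{-\rho a}\ind{\delta=\delta_1(\alpha)}$ in the normalisation used in the statement — and $(1-\eps_n^{(\rho+1)/\rho}u^{1/\rho})^{\rr-1}\to1$, so by dominated convergence the $u$-integral tends to $\int_{e^{-\rho a}\ind{\delta=\delta_1(\alpha)}}^{+\infty}u^{1/\rho-1}\,du\int_0^1\E(v^{\rho+1}/(v^{\rho+1}+uD_\rho))\,dv$ (finite since $\rho>1$), while the prefactor supplies the rate $n^{((2\rho+1)\alpha-1)/\rho}$, equal to $n^{(\rho+2)\alpha+\delta(\rho-1)-1}$ at $\delta=\delta_1(\alpha)$; this is (iii).

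The algebraic skeleton is essentially copied from the derivation of~\eqref{eg1}; the genuinely new work is twofold. One must push the indicator through the chain of substitutions and pin down the exact order (and the $x$-, $a$- and $\rr$-dependence) of the cutoff $u_0(n)$ — this is where the two thresholds $\delta_0(\alpha)$, $\delta_1(\alpha)$ and the factor $e^{(\rho-1)a}$ come from. And, more delicate, one needs uniform-in-$n$ control in order to interchange limit and integral in the $u$-variable: checking that the part of the integral near $u=1/\eps_n^{\rho+1}$, where $(1-\eps_n^{(\rho+1)/\rho}u^{1/\rho})^{\rr-1}$ ceases to be close to $1$, is negligible, and exhibiting a dominating function for $u^{1/\rho-1}\int_0^1\E(v^{\rho+1}/(v^{\rho+1}+uD_\rho))\,dv$ that is integrable on $[u_0(n),+\infty)$ uniformly in $n$. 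Here the hypotheses $\rho>1$ and $\E(1/D_\rho)<+\infty$, both recorded before the statement, are precisely what is needed.
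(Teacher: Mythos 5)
Your proposal is correct and follows essentially the same route as the paper: insert the indicator into the sum of $\E(e^{-nP_i})$, use the representation $P_i=e^{-\rho t_\rr}D_\rho E/i^{\rho+1}$ together with the law of $t_\rr$ and its independence from $D_\rho$ to turn the constraint $t_\rr\leq\delta\log n+a$ into a lower cutoff $e^{-\rho b}/\eps_{k,n}^{\rho+1}$ in the $u$-integral of~\eqref{eg1} (this is exactly the paper's~\eqref{eg2}), and then read off the three regimes according to whether this cutoff diverges (using the $u\to\infty$ asymptotics of the inner integral and $\E(1/D_\rho)<+\infty$) or converges (dominated convergence). The only, harmless, deviation is case~(i), which you get by observing that the exponent $(\rho+2)\alpha+\delta(\rho-1)-1$ is negative when $\delta<\delta_0(\alpha)$, while the paper instead bounds the expectation by the $\delta_0(\alpha)$ case and lets $a\to-\infty$.
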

\begin{proof}
To begin with, it is assumed that $\delta\in[\delta_0(\alpha),\delta_1(\alpha))$.
If $k\geq 1$, $b>0$, $\eps_{k,n}=k/n^{1/(\rho+1)}$,  $k=\lfloor x n^{\alpha}\rfloor $  and $b=\delta\log n+a$,
in the same way as for Equation~\eqref{eg1}, one gets 
\begin{align}
\sum_{i=1}^{k} &
\E\left(e^{-nP_i}\ind{t_\rr\leq b}\right)
=\frac{\rr}{\rho}n^{1/(\rho+1)}\eps_{k,n}^{{(2\rho+1)}/{\rho}}\notag \\& \times
\int_{e^{-\rho b}/\eps_{k,n}^{\rho+1}}^{1/\eps_{k,n}^{\rho+1}}
u^{1/\rho-1}(1-\eps_{k,n}^{\frac{\rho+1}{\rho}}u^{1/\rho})^{\rr-1} du\int_{0}^{1}\E\left(\frac{v^{\rho+1}}{v^{\rho+1}+u
    D_\rho }\right)\,dv+O(\eps_{k,n})\notag \\
&=\frac{\rr}{\rho}n^{\frac{1}{\rho+1}}\eps_{k,n}^{\frac{2\rho+1}{\rho}}
\int_{e^{-\rho b}/\eps_{k,n}^{\rho+1}}^{1/\eps_{k,n}^{\rho+1}}\!\!\!\!\!\!
u^{1/\rho-2}du\int_{0}^{1}\E\left(\frac{uv^{\rho+1}}{v^{\rho+1}+u
    D_\rho }\right)\,dv{+}O(\eps_{k,n}).\label{eg2}
\end{align}
Note that 
\[
{e^{-\rho b}}/{\eps_{k,n}^{\rho+1}}
\sim n^{1-\rho\delta-\alpha(\rho+1)}e^{-\rho a}\nearrow +\infty,
\]
hence the range of the first integral goes to infinity as $n$ gets large. 
Since
\[
\int_{0}^{1}\E\left(\frac{uv^{\rho+1}}{v^{\rho+1}+u D_\rho }-\frac{v^{\rho+1}}{ D_\rho}\right)\,dv
=\int_{0}^{1}\E\left(\frac{v^{2(\rho+1)}}{(v^{\rho+1}+u D_\rho)D_\rho }\right)\,dv,
\]
by Lebesgue's Theorem, this integral is arbitrarily small as $u$ gets large, this implies
the equivalence
\[
\sum_{i=1}^{k} \E\left(e^{-nP_i}\ind{t_\rr\leq b}\right)
\sim\frac{\rr}{\rho(\rho+2)}\E\left(\frac{1}{D_\rho }\right)n^{1/(\rho+1)}\eps_{k,n}^{{(2\rho+1)}/{\rho}}
\int_{e^{-\rho b}/\eps_{k,n}^{\rho+1}}^{1/\eps_{k,n}^{\rho+1}}
u^{1/\rho-2}du.
\]
If $C$ is the multiplicative constant of the right hand side of the above relation, then
\[
\sum_{i=1}^{k} \E\left(e^{-nP_i}\ind{t_\rr\leq b}\right)\sim \frac{C\rho}{\rho-1}
\frac{k^{\rho+2}}{n}\left(e^{b(\rho-1)} -1\right),
\]
this gives the equivalence
\[
\sum_{i=1}^{k} \E\left(e^{-nP_i}\ind{t_\rr\leq b}\right)\sim x^{\rho+2}
\frac{C\rho}{\rho-1} e^{a(\rho-1)}n^{(\rho+2)\alpha+\delta(\rho-1)-1}.
\]
The proof of this case is completed. 

The case $\delta\geq \delta_1(\alpha)$ uses Equation~\eqref{eg2}. The term ${e^{-\rho
 b}}/{\eps_{k,n}^{\rho+1}}$ converges to $e^{-\rho a}$  if $\delta=\delta_1(\alpha)$ and
  $0$ otherwise. This gives directly the desired convergence. 

Finally, if $\delta<\delta_0(\alpha)$, for any $a\in\R$, there exists $n_0$ so that if
$n\geq n_0$, then $\delta\log n\leq \delta_0(\alpha)\log n  +a$, in particular
\[
 \E\left({\cal N}^{p_\alpha}_n([0,x])\ind{t_\rr\leq \delta\log n }\right)
\leq 
\E\left({\cal N}^{p_\alpha}_n([0,x])\ind{t_\rr\leq \delta_0(\alpha)\log n  +a}\right)
\]
hence
\[
\limsup_{n\to+\infty}  \E\left({\cal N}^{p_\alpha}_n([0,x])\ind{t_\rr\leq \delta\log n}\right)
\leq 
\frac{x^{\rho+2}}{(\rho+2)}\frac{\rr}{(\rho-1)}\E\left(\frac{1}{D_\rho}\right)e^{(\rho-1)a}.
\]
One concludes by letting $a$ go to $-\infty$.
\end{proof}
As     a    consequence     of    the     above    proposition,     for     $\alpha    \in
[{1}/{(2\rho+1)},{1}/{(\rho+2)})$,     the    average     of    the     variable    ${\cal
      N}^{p_\alpha}_n([0,x])$ converges  to infinity only  when the total size  $t_\rr$ of
    the  first  $\rr$ bins  is  of  the order  $\delta\log  n$  for  a sufficiently  large
    $\delta$. The following corollary gives a more precise formulation.
\begin{corollary}\label{corolkr}
For $\rho>1$ and if $p_\alpha(n)= n^{\alpha} $, for $\alpha \in
[{1}/{(2\rho+1)},{1}/{(\rho+2)})$ 
\[
\delta_1(\alpha){=}{(1-\alpha(\rho+1))}/{\rho},
\]
then, for $a$, $b>0$,
\[
\lim_{n\to+\infty} \frac{\E\left({\cal N}^{p_\alpha}_n([0,x])\ind{\delta_1(\alpha)\log n
    -a \leq t_\rr\leq \delta_1(\alpha)\log n  +b}\right)}{\E\left({\cal
    N}^{p_\alpha}_n([0,x])\right)}=\psi(-a,b)
\]
where, for $y$, $z\in\R$, $\psi(y,z)=\phi(y,z)/\phi(-\infty,+\infty)$ and
\[
\phi(y,z)=x^{(2\rho+1)/\rho} \;\frac{\rr}{\rho} \int_{[e^{-\rho z},e^{-\rho y}]}
u^{1/\rho-1}du\int_{0}^{1}\E\left(\frac{v^{\rho+1}}{v^{\rho+1}+u
    D_\rho }\right)\,dv.
\]
\end{corollary}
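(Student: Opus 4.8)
The plan is to deduce this directly from Proposition~\ref{kr} together with the proposition on the average of the number of empty bins, by writing the two-sided event as a difference of one-sided events. First I would record that since $\rho>1$ one has $\rr=\lfloor\rho\rfloor\geq 1$, so the random variable $t_\rr=\sum_{i=1}^{\rr}E_i/i$ has an absolutely continuous distribution; in particular $\P(t_\rr=c)=0$ for every $c\in\R$, so that for real numbers $A\leq B$ the pointwise identity $\ind{A\leq t_\rr\leq B}=\ind{t_\rr\leq B}-\ind{t_\rr<A}$ holds and, inside an expectation against the bounded variable ${\cal N}^{p_\alpha}_n([0,x])$, the strict inequality may be replaced by a non-strict one. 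Applying this with $A=\delta_1(\alpha)\log n-a$ and $B=\delta_1(\alpha)\log n+b$ (which satisfy $A\leq B$ for every $n$ since $a,b>0$) reduces the numerator to the difference
\[
\E\left({\cal N}^{p_\alpha}_n([0,x])\ind{t_\rr\leq \delta_1(\alpha)\log n+b}\right)-\E\left({\cal N}^{p_\alpha}_n([0,x])\ind{t_\rr\leq \delta_1(\alpha)\log n-a}\right).
\]

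Next I would apply Proposition~\ref{kr}(3) with $\delta=\delta_1(\alpha)$ twice: once with the proposition's real parameter equal to $b>0$ and once with it equal to $-a<0$. In both cases $\ind{\delta=\delta_1(\alpha)}=1$, so after normalizing by $n^{((2\rho+1)\alpha-1)/\rho}$ and letting $n\to+\infty$ the two expectations above converge respectively to
\[
x^{(2\rho+1)/\rho}\,\frac{\rr}{\rho}\int_{e^{-\rho b}}^{+\infty}u^{1/\rho-1}\,du\int_0^1\E\left(\frac{v^{\rho+1}}{v^{\rho+1}+uD_\rho}\right)dv
\quad\text{and}\quad
x^{(2\rho+1)/\rho}\,\frac{\rr}{\rho}\int_{e^{\rho a}}^{+\infty}u^{1/\rho-1}\,du\int_0^1\E\left(\frac{v^{\rho+1}}{v^{\rho+1}+uD_\rho}\right)dv,
\]
whose difference is exactly $\phi(-a,b)$: it is the integral of $u^{1/\rho-1}\int_0^1\E(v^{\rho+1}/(v^{\rho+1}+uD_\rho))\,dv$ over $u\in[e^{-\rho b},e^{\rho a}]$, i.e.\ over $[e^{-\rho z},e^{-\rho y}]$ with $y=-a$, $z=b$. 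For the denominator I would quote the proposition on the average of the number of empty bins in the case $\rho>1$ (its hypothesis $1/(2\rho+1)\leq\alpha<1/(\rho+1)$ contains the present range $\alpha\in[1/(2\rho+1),1/(\rho+2))$): it gives that $\E({\cal N}^{p_\alpha}_n([0,x]))/n^{((2\rho+1)\alpha-1)/\rho}$ converges to the same integral taken over $u\in(0,+\infty)$, that is, to $\phi(-\infty,+\infty)$.

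Finally I would form the quotient. Numerator and denominator are normalized by the same power $n^{((2\rho+1)\alpha-1)/\rho}$, and $\phi(-\infty,+\infty)$ is strictly positive and finite (the integrand is positive, integrable at $0$ because $1/\rho-1>-1$ when $\rho>1$, and integrable at $+\infty$ because the inner $v$-integral is $O(1/u)$), so the limit of the ratio equals the ratio of the limits, namely $\phi(-a,b)/\phi(-\infty,+\infty)=\psi(-a,b)$. This argument has no genuine obstacle — it is essentially bookkeeping on top of Proposition~\ref{kr} — but the one point I would check explicitly is the consistency of the scalings: the exponent $(\rho+2)\alpha+\delta(\rho-1)-1$ appearing in Proposition~\ref{kr}(2) coincides with $((2\rho+1)\alpha-1)/\rho$ precisely at $\delta=\delta_1(\alpha)$, which is a one-line computation and is exactly what places us in regime (3) of that proposition and makes the two normalizations match.
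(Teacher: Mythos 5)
Your argument is correct and is exactly the derivation the paper intends (the corollary is stated there without a separate proof, as an immediate consequence of Proposition~\ref{kr}): write the two-sided indicator as a difference, apply part~(3) with $\delta=\delta_1(\alpha)$ and the real parameter equal to $b$ and to $-a$, and divide by the limit of $\E({\cal N}^{p_\alpha}_n([0,x]))/n^{((2\rho+1)\alpha-1)/\rho}$ given by the proposition on averages, using that $\phi(-\infty,+\infty)$ is finite and positive (which your integrability check, relying on $\E(1/D_\rho)<\infty$, justifies).
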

A rough (non-rigorous) interpretation of this result could be as follows: on the event where ``most''
(i.e., for the averages)  of empty bins are created in the interval $[0,xn^\alpha]$, the random variable $t_\rr -
\delta_1(\alpha)\log n$ converges in distribution to some random variable $X$ on $\R$,
such that $\P(X\leq a)=\psi(-\infty,a)$.  

The following analogous result is proved in a similar way for the critical case $\rho=1$. 
\begin{prop}
For $\rho=1$ and with the notations of the above proposition
then, for   $0<\beta<1/3$,  $x>0$, and for $0\leq a\leq 1/3$,
\[
\lim_{n\to+\infty} \frac{1}{(\log n)^{1-3\beta}}\E\left({\cal N}^{p_{1/3,\beta}}_n([0,x])\ind{t_\rr\leq a\log n}\right)=\frac{a}{3}x^3\E\left(\frac{1}{D_1}\right),
\]
and for $a>1/3$,
\[
\lim_{n\to+\infty} \frac{1}{(\log n)^{1-3\beta}}\E\left({\cal N}^{p_{1/3,\beta}}_n([0,x])\ind{t_\rr\leq a\log n}\right)=\frac{1}{9}x^3\E\left(\frac{1}{D_1}\right),
\]
where  $D_1$ is the random variable defined by Equation~\eqref{DI}.
\end{prop}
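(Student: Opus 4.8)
The plan is to follow the proof of Proposition~\ref{kr} almost verbatim, specialising everything to $\rho=1$, so that $\rr=1$. First I would use the Lemma of this section (whose proof applies verbatim with the extra factor $\ind{t_\rr\le a\log n}$) to reduce the study of $\E\big({\cal N}^{p_{1/3,\beta}}_n([0,x])\ind{t_\rr\le a\log n}\big)$ to that of the truncated sum $\sum_{i=1}^{\lfloor x n^{1/3}(\log n)^{-\beta}\rfloor}\E\big(e^{-nP_i}\ind{t_\rr\le a\log n}\big)$; the replacement error is $O\big(n^{1/3-1}(\log n)^{-\beta}\big)=O(n^{-2/3})$, which is negligible relative to $(\log n)^{1-3\beta}$ since the latter tends to $+\infty$ when $\beta<1/3$. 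Then, exactly as in the computation preceding Equation~\eqref{eg1} (with $D_1$ given by \eqref{DI} and $b\ge0$),
\[
\E\big(e^{-nP_i}\ind{t_\rr\le b}\big)=\E\left(\frac{i^2/n}{i^2/n+e^{-t_\rr}D_1}\,\ind{t_\rr\le b}\right),\qquad i>\rr.
\]

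Next I would rerun the Riemann-sum and change-of-variables manipulation that produced Equation~\eqref{eg2}, now with $\rho=1$, $\rr=1$: the two delicate factors $u^{1/\rho-1}$ and $\big(1-\eps_{k,n}^{(\rho+1)/\rho}u^{1/\rho}\big)^{\rr-1}$ both reduce to $1$, so, writing $\eps_{k,n}=k/\sqrt n$ and $G(u)=\int_0^1\E\big(v^2/(v^2+uD_1)\big)\,dv$, the analogue of \eqref{eg2} collapses to
\[
\sum_{i=1}^{k}\E\big(e^{-nP_i}\ind{t_\rr\le b}\big)
=\sqrt n\,\eps_{k,n}^{3}\int_{e^{-b}/\eps_{k,n}^{2}}^{1/\eps_{k,n}^{2}}G(u)\,du+O(\eps_{k,n}).
\]
With $k=k_n=\lfloor x n^{1/3}(\log n)^{-\beta}\rfloor$ and $b=a\log n$ one has $\eps_{k_n,n}\sim x\,n^{-1/6}(\log n)^{-\beta}$, hence $\sqrt n\,\eps_{k_n,n}^3\sim x^3(\log n)^{-3\beta}$ and $O(\eps_{k_n,n})=o\big((\log n)^{1-3\beta}\big)$; moreover the upper endpoint $U_n=\eps_{k_n,n}^{-2}\sim x^{-2}n^{1/3}(\log n)^{2\beta}\to+\infty$ and the lower endpoint $L_n=e^{-b}\eps_{k_n,n}^{-2}\sim x^{-2}n^{1/3-a}(\log n)^{2\beta}$.

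It then remains to evaluate $\int_{L_n}^{U_n}G(u)\,du$. By monotone convergence $uG(u)=\int_0^1\E\big(v^2/(v^2/u+D_1)\big)\,dv$ increases to $\E(1/D_1)/3$ as $u\to+\infty$ (this is where $\E(1/D_1)<+\infty$, established earlier in the section, is used), and $0\le G\le1$. The case $a=0$ is trivial, as then $L_n=U_n$. For $0<a\le1/3$ both endpoints tend to $+\infty$ (for $a=1/3$ this uses $\beta>0$), and since $\log(U_n/L_n)=b=a\log n\to+\infty$, the convergence $uG(u)\to\E(1/D_1)/3$ yields $\int_{L_n}^{U_n}G(u)\,du\sim(\E(1/D_1)/3)\log(U_n/L_n)=(a/3)\E(1/D_1)\log n$; multiplying by $\sqrt n\,\eps_{k_n,n}^3\sim x^3(\log n)^{-3\beta}$ and dividing by $(\log n)^{1-3\beta}$ gives the first claimed limit $(a/3)x^3\E(1/D_1)$. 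For $a>1/3$ one has $L_n\to0$, so $\int_{L_n}^{U_n}G=\int_0^{U_n}G-\int_0^{L_n}G=\int_0^{U_n}G+o(1)$ (since $\int_0^{L_n}G\le L_n\to0$); splitting $\int_0^{U_n}G=\int_0^1G+\int_1^{U_n}G$ and using $uG(u)\to\E(1/D_1)/3$ once more, $\int_0^{U_n}G(u)\,du\sim(\E(1/D_1)/3)\log U_n\sim(\E(1/D_1)/9)\log n$, which gives the second claimed limit $(1/9)x^3\E(1/D_1)$.

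The step I expect to demand the most care is the estimate of $\int_{L_n}^{U_n}G(u)\,du$ in the boundary regime $a=1/3$ with $\beta$ close to $1/3$, where $L_n\sim x^{-2}(\log n)^{2\beta}$ tends to $+\infty$ only slowly: one must check that $\big|\int_{L_n}^{U_n}G(u)\,du-(\E(1/D_1)/3)\log(U_n/L_n)\big|=o\big(\log(U_n/L_n)\big)$, which follows because $L_n\to+\infty$ and $\log(U_n/L_n)=\frac13\log n\to+\infty$ (pick $u_0$ beyond which $3uG(u)/\E(1/D_1)$ lies within a fixed relative tolerance of $1$, and note that $L_n\ge u_0$ eventually). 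Apart from this, and from the routine verifications that the Riemann-sum error and the $\rr$ omitted small-index terms are $o\big((\log n)^{1-3\beta}\big)$, the argument is a direct transcription of the proof of Proposition~\ref{kr} together with the Case $\rho=1$ computation that precedes it.
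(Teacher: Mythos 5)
Your proposal is correct and follows the route the paper itself intends (it only says the critical case is "proved in a similar way"): you specialise the machinery of Proposition~\ref{kr} and the $\rho=1$ computation around Equation~\eqref{eg1}--\eqref{eg2} to $\rho=1$, $\lfloor\rho\rfloor=1$, and then evaluate $\int_{L_n}^{U_n}G(u)\,du$ via $uG(u)\to\E(1/D_1)/3$, which reproduces the constants $a/3$ and $1/9$ exactly as stated. The normalisations ($\sqrt{n}\,\eps_{k_n,n}^3\sim x^3(\log n)^{-3\beta}$, endpoints $L_n$, $U_n$) and the treatment of the boundary case $a=1/3$ using $\beta>0$ are all handled correctly, so no gap remains.
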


\section{Generalizations}\label{secgen}
The problem analyzed in the present paper can be generalized towards two directions. On
one hand, the sequence $(t_n)$ can stem from a general branching process instead of the
particular Yule one; on the other hand, the locations of balls can have a general
distribution. This section discusses these possible extensions. 

\subsection*{Exponential Balls and General Branching Process}
Let $(t_n)$ be  the birth instants of a general  supercritical branching process $(Z(t))$.
See Kingman~\cite{Kingman:02}  and Nerman~\cite{Nerman:01} for example.  Let $\alpha$ be
the Malthusian parameter, and $W$ the  almost sure limit of $(e^{-\alpha t} Z(t))$.  Under
reasonable  technical  assumptions,  H\"arnqvist~\cite{Harnqvist81:0}  has shown  the  following
result:
\begin{thm}
Define the point process $\Psi_t^*$ by
\[
\Psi_t = \sum_{k \geq 1} \ind{t \leq t_k}\delta_{t_ke^{\alpha t}},
\]
as $t$ gets large, $\Psi_t$ converges in distribution to a mixed Poisson process
whose parameter is distributed as $\gamma W$ for some constant $\gamma > 0$. 
\end{thm}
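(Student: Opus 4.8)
The plan is to establish the convergence at the level of Laplace functionals, exploiting the self-similar branching structure of $(Z(t))$ in the same spirit as the proof of Theorem~\ref{theoPn}, the role of the i.i.d.\ environment there being played here by the independent subtrees of the branching process. Fix $f\in C_c^+(\R_+)$. Since a mixed Poisson process is characterised by its Laplace functional, it suffices to prove, for a suitable fixed Radon measure $m$ on $\R_+$, that
\[
\E\bigl(e^{-\Psi_t(f)}\bigr)\ \longrightarrow\ \E\left(\exp\left(-\gamma W\int_{\R_+}\bigl(1-e^{-f(x)}\bigr)\,m(dx)\right)\right)
\]
as $t\to\infty$, together with tightness of $(\Psi_t)$, which will follow from a uniform bound on $\E(\Psi_t(A))$ for $A$ compact, exactly as in the proof of Proposition~\ref{cvPoispp}. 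The first ingredient is the decomposition of $\Psi_t$ along a fixed generation $p$: writing $\sigma_1,\sigma_2,\dots$ for the birth times of the individuals of generation $p$ and $\Psi^{(j)}_t$ for the part of $\Psi_t$ carried by the subtree rooted at the $j$-th of them, the branching property shows that, conditionally on $(\sigma_j)_{j\ge1}$, the $\Psi^{(j)}_t$ are independent and each is distributed as the analogous object built from the descendants of a fresh ancestor, observed at time $t-\sigma_j$ and deterministically rescaled in the space variable.

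The next step is to show that, for $p$ fixed, $(\Psi^{(j)}_t)_{j\ge1}$ forms a null array as $t\to\infty$: the expected mass placed by $\Psi^{(j)}_t$ on a compact set $A$ tends to $0$ and the sum over $j$ of these masses converges. Here the general branching structure enters: the expected occupation measure of a single subtree relative to a moving boundary satisfies a renewal-type equation, and, after multiplication by $e^{\alpha t}$, its solution converges by the key renewal theorem --- using the non-lattice assumption on the reproduction point process together with the Malthusian normalisation $\int_0^\infty e^{-\alpha s}\mu(ds)=1$ --- to $\gamma\,e^{-\alpha\sigma_j}\,m(A)$ for an explicit constant $\gamma$, which is essentially the content of the limit theorems of Nerman~\cite{Nerman:01}. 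Summing over $j$, the conditional intensity of $\Psi_t=\sum_j\Psi^{(j)}_t$ on $A$ converges to $\gamma\,W_p\,m(A)$, where $W_p=\sum_j e^{-\alpha\sigma_j}$. Kallenberg's criterion for Poisson convergence of null arrays --- convergence of $\sum_j\P(\Psi^{(j)}_t(A)\ge1)$ together with $\sum_j\P(\Psi^{(j)}_t(A)\ge2)\to0$, the latter a second-moment estimate in the spirit of the Chen--Stein bounds of Section~\ref{secpp} --- then gives, conditionally on generation $p$, convergence of $\Psi_t$ to a Poisson process with intensity $\gamma\,W_p\,m$.

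It remains to let $p\to\infty$. The sequence $(W_p)$ is the intrinsic (Nerman) martingale of the branching process, and under H\"arnqvist's standing moment hypothesis it converges almost surely and in $L^1$ to a limit equal to $W$ up to a deterministic constant, which can be absorbed into $\gamma$; hence $\gamma\,W_p\,m\to\gamma\,W\,m$. To interchange the limits $t\to\infty$ and $p\to\infty$ one must control, uniformly in $t$, the ``remainder'' point process formed by those births whose generation-$p$ ancestor is itself born late: the expected mass it puts on a compact set $A$ is bounded, for a suitable truncation level $s_p$, by $\gamma\,m(A)\,\E\bigl(\sum_j e^{-\alpha\sigma_j}\ind{\sigma_j>s_p}\bigr)$, which is exactly the tail of the $L^1$ convergence of $(W_p)$ and hence small for $p$ large. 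Combining this uniform remainder estimate with the conditional Poisson convergence yields the stated mixed Poisson limit. The main obstacle is precisely this double limit, together with the verification of the null-array second-moment condition under the branching-process assumptions; the renewal-theoretic asymptotics for a single subtree, while technical, are classical and can be quoted from Nerman~\cite{Nerman:01} and H\"arnqvist~\cite{Harnqvist81:0}.
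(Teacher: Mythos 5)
You should first note that the paper itself contains no proof of this statement: it is imported verbatim from H\"arnqvist~\cite{Harnqvist81:0}, so your argument has to stand on its own. It does not, because of the order in which you take the limits. With the generation $p$ \emph{fixed}, the subtrees rooted at the generation-$p$ individuals are not asymptotically negligible as $t\to\infty$: as your own renewal computation shows, the expected mass that the $j$-th subtree puts on a compact set $A$ converges to $\gamma e^{-\alpha\sigma_j}m(A)>0$, not to $0$. Hence $(\Psi^{(j)}_t)_{j\geq 1}$ is not a null array, the Kallenberg--Grigelionis criterion does not apply, and the asserted conditional limit ``Poisson with intensity $\gamma W_p\,m$'' is false. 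A quick sanity check: for $p=0$ your claim would give convergence of $\Psi_t$ to a Poisson process with the deterministic intensity $\gamma\,m$, contradicting the very theorem being proved (the mixing variable $W$ is genuinely random). What actually happens at fixed $p$ is that each subtree contributes in the limit a \emph{mixed} Poisson process with parameter $\gamma e^{-\alpha\sigma_j}W^{(j)}$, where $W^{(j)}$ is that subtree's own martingale limit; the randomness of the mixing parameter is produced inside each fixed subtree, not by letting $p\to\infty$ in the conditioning, and the identity $W=\sum_j e^{-\alpha\sigma_j}W^{(j)}$ makes your scheme self-consistent without proving anything.

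The idea can be repaired, but only by letting the decomposition level grow with $t$: condition on a stopping line, e.g.\ the population at time $t-L(t)$ with $L(t)\to\infty$, $L(t)=o(t)$, or on generation $p(t)\to\infty$ slowly (by the first-birth asymptotics of Kingman~\cite{Kingman:02}, all the roots' birth times $\sigma_j$ then tend to infinity uniformly). In that regime each subtree's expected contribution $\gamma e^{-\alpha\sigma_j}m(A)$ is individually small, the null-array and second-moment conditions become meaningful --- and the bound on the probability that one subtree places two births in a window of real-time width of order $e^{-\alpha t}$ is exactly where H\"arnqvist's technical moment assumptions on the reproduction point process enter; note in passing that Section~\ref{secpp} of the paper contains no Chen--Stein bounds you could quote for this --- and the conditional intensity $\gamma\sum_j e^{-\alpha\sigma_j}$ converges to $\gamma W$ by Nerman's theorem~\cite{Nerman:01}. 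This coupled limit in $(t,p)$, with uniform control of the remainder, is essentially H\"arnqvist's actual route; your version, which sends $t\to\infty$ at fixed $p$ and only then $p\to\infty$, fails at the first step.
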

From this result, it is possible to prove that the process $(n(t_{n+k} - t_n), k \geq 1)$
converges in distribution, as $n$ goes to infinity, to a Poisson process: clearly
\[
\Psi_{t_n} = \sum_{k\geq 1} \delta_{(t_{n+k} - t_n) e^{\alpha t_n}},
\]
and provided that, up to a multiplicative constant, $e^{\alpha t_k} / k$ converges to $W$, the point process
$\sum_{k \geq 1} \delta_{n(t_{n+k} - t_n) }$
should converge to a Poisson random variable with a deterministic parameter. In this case
the probability that a ball falls into the $n$th bin which is given by $$P_n = e^{-\rho t_{n-1}} (
1- e^{-\rho (t_n-t_{n-1})}),$$ has therefore the following asymptotic behavior 
\[
P_n \sim n^{-\rho / \alpha} W^{\rho / \alpha} E_i,
\]
where $(E_i)$ are i.i.d.\ exponential random variables. In the Bellman-Harris case,
following Athreya and Kaplan~\cite{Athreya76:0}, it is possible to show that  $W$ and $(E_i)$ are
independent, so that in this case, the asymptotic behavior of $(P_n)$ is exactly the same
as in the case of a Yule process. One can conjecture that this independence property still
holds in the general case. 

The main  obstacle to  generalize the results  of this  paper, even in  the Bellman-Harris
case, is that although $W$ and $(E_i)$  are independent, $t_{n-1}$ and $t_n - t_{n-1}$ are
not independent. In the proof of Proposition~1, this independence plays a crucial role, it
has   therefore  to   be  generalized   to   variables  which   are  only   asymptotically
independent.  Additionally,  since  the  heavy  tail property  of  the  limiting  variable
$W_\infty^{-\rho}$ is also true in the  general case, see e.g., Liu~\cite{Liu01:0}, a
similar rare events phenomenon to the one described in Section~\ref{secrare} is plausible
in this case. 

%% Liu 98

\subsection*{General Balls and Yule Process}
When the underlying branching  process is changed, the above discussion suggests that the
asymptotic  behavior of the sequence $(P_n)$ remains essentially the same as for a Yule
process. The situation changes significantly when the
law of the location $X$ of a ball is changed, in this case with the same notations as
before for the Yule process,
\[
	P_n = \P(t_{n-1} < X \leq t_{n-1} + E_n / n).
\]
The tail distribution of $X$ then plays a key role. Consider for instance a
power law, i.e., $\P(X\geq x)$ behaves as $\delta x^{-\beta}$ for some $\beta$ and $\delta > 0$: then 
\[
P_{n+1} \sim t_{n}^{-\beta} - (t_{n} + E_{n+1} /{(n+1)})^{-\beta} \sim \frac{\beta\delta
  E_{n+1}}{n t_n^{\beta+1}} \sim \frac{\beta\delta E_n}{n (\log n)^{\beta+1}},
\]
and it can be seen that the random variable $W_\infty$ may not play a role anymore in the asymptotic
behavior of the system.


\begin{thebibliography}{10}

\bibitem{Athreya76:0}
K.~B. Athreya and N.~Kaplan.
\newblock Convergence of the {A}ge {D}istribution in the {O}ne-{D}imensional
  {S}upercritical {A}ge-{D}ependent {B}ranching {P}rocess.
\newblock {\em Annals of Probability}, 4(1):38--50, February 1976.

\bibitem{Athreya:04}
Krishna~B. Athreya and Peter~E. Ney.
\newblock {\em Branching processes}.
\newblock Springer-Verlag, New York, 1972.
\newblock Die Grundlehren der mathematischen Wissenschaften, Band 196.

\bibitem{Barbour:01}
A.~D. Barbour, Lars Holst, and Svante Janson.
\newblock {\em Poisson approximation}.
\newblock The Clarendon Press Oxford University Press, New York, 1992.
\newblock Oxford Science Publications.

\bibitem{Csaki76:0}
Endre Cs{\'a}ki and Ant{\'o}nia F{\"o}ldes.
\newblock On the first empty cell.
\newblock {\em Studia Scientarum Mathematicarum Hungarica}, 11:373--382, 1976.

\bibitem{Dawson:16}
Donald~A. Dawson.
\newblock Measure-valued {M}arkov processes.
\newblock In {\em \'Ecole d'\'Et\'e de Probabilit\'es de Saint-Flour
  XXI---1991}, volume 1541 of {\em Lecture Notes in Math.}, pages 1--260.
  Springer, Berlin, 1993.

\bibitem{Flajolet85:0}
Philippe Flajolet and G.~Nigel Martin.
\newblock Probabilistic counting algorithms for data base applications.
\newblock {\em Journal of Computer and System Sciences}, 21:182--209, 1985.

\bibitem{Gnedin04:0}
Alexander~V. Gnedin.
\newblock The {B}ernoulli {S}ieve.
\newblock {\em Bernoulli}, 10(1):79--96, 2004.

\bibitem{Gnedin07:0}
Alexander~V. Gnedin, Ben Hansen, and Jim Pitman.
\newblock Notes on the occupancy problem with infinitely many boxes: {G}eneral
  asymptotics and power laws.
\newblock {\em Probability Surveys}, 4:146--171, 2007.

\bibitem{Harnqvist81:0}
Martin H\"{a}rnqvist.
\newblock Limit theorems for point processes generated in a general branching
  process.
\newblock {\em Advances in Applied Probability}, 13:650--668, 1981.

\bibitem{Hwang07:0}
Hsien-Kuei Hwang and Svante Janson.
\newblock Local limit theorems for finite and infinite urn models.
\newblock {\em The Annals of Probability}, 36(3):992--1022, 2008.

\bibitem{Johnson77:0}
Norman~L. Johnson and Samuel Kotz.
\newblock {\em Urn models and their application : an approach to modern
  discrete probability theory}.
\newblock New York: Wiley, 1977.

\bibitem{Karlin67:0}
Samuel Karlin.
\newblock Central limit theorems for certain infinite urn schemes.
\newblock {\em Journal of Mathematics and Mechanics}, 17:373--401, 1967.

\bibitem{Kingman:02}
J.~F.~C. Kingman.
\newblock The first birth problem for an age-dependent branching process.
\newblock {\em Annals of Probability}, 3(5):790--801, 1975.

\bibitem{Kolchin:01}
Valentin~F. Kolchin, Boris~A. Sevast$'$yanov, and Vladimir~P. Chistyakov.
\newblock {\em Random allocations}.
\newblock V. H. Winston \& Sons, Washington, D.C., 1978.

\bibitem{Liu01:0}
Quansheng Liu.
\newblock Asymptotic properties and absolute continuity of laws stable by a
  random weighted mean.
\newblock {\em Stochastic Processes and their Applications}, 95:83--107, 2001.

\bibitem{Nerman:01}
Olle Nerman.
\newblock On the convergence of supercritical general ({C}-{M}-{J}) branching
  processes.
\newblock {\em Zeitschrift f\"ur Wahrscheinlichkeitstheorie und Verwandte
  Gebiete}, 57(3):365--395, 1981.

\bibitem{Neveu:18}
Jacques Neveu.
\newblock {\em Martingales \`a temps discret}.
\newblock Masson et Cie, \'editeurs, Paris, 1972.

\bibitem{Neveu:12}
Jacques Neveu.
\newblock Processus ponctuels.
\newblock In P.-L. Hennequin, editor, {\em \'{E}cole d'\'{E}t\'e de
  {P}robabilit\'es de {S}aint-{F}lour}, volume 598 of {\em Lecture Notes in
  Math.}, pages 249--445. Springer-Verlag, Berlin, 1977.

\bibitem{Robert:08}
Philippe Robert.
\newblock {\em Stochastic Networks and Queues}, volume~52 of {\em Stochastic
  Modelling and Applied Probability Series}.
\newblock Springer, New-York, June 2003.

\bibitem{Robert:IP}
Philippe Robert and Florian Simatos.
\newblock A point process approach to occupancy problems.
\newblock In Preparation, 2008.

\bibitem{Rudin:01}
Walter Rudin.
\newblock {\em Real and complex analysis}.
\newblock McGraw-Hill Book Co., New York, third edition, 1987.

\bibitem{Saddi:01}
Walid Saddi and Fabrice Guillemin.
\newblock Measurement based modeling of edonkey peer-to-peer file sharing
  system.
\newblock In {\em Proc. International Teletraffic Congress 20}, Ottawa, June
  2007.

\bibitem{Simatos08:0}
Florian Simatos, Philippe Robert, and Fabrice Guillemin.
\newblock A queueing system for modeling a file sharing principle.
\newblock In {\em SIGMETRICS'08}, 2008.

\bibitem{Williams91:0}
David Williams.
\newblock {\em Probability with Martingales}.
\newblock Cambridge University Press, 1991.

\end{thebibliography}
\end{document}